\newcommand{\R}{\mathbb{R}}
\newcommand{\N}{\mathbb{N}}
\newenvironment{dimo}{\textit{Proof.\ }}{\begin{flushright}$\square$\end{flushright}}
\newtheorem*{thmnonumber}{Theorem}
\newtheorem{thm}{Theorem}[section]
\newtheorem{defi}{Definition}[section]
\newtheorem{prop}[thm]{Proposition}
\newtheorem{cor}[thm]{Corollary}
\newtheorem{rmk}[thm]{Remark}
\title[Long time behaviour for the reinitialization of the distance function]{Long time behaviour for the reinitialization of the distance function}
\author{Marcello Carioni}
\address{
University of Graz,
Heinrichstraße 36,
8010, Graz,
Austria}
\email{marcello.carioni@uni-graz.at}
\begin{document}
\maketitle
\begin{abstract}
In this article we study the long-time behaviour of a class of non-coercive Hamilton-Jacobi equations, that includes, as a notable example, the so called reinitialization of the distance function. In particular we prove that its viscosity solution converges uniformly as $t\rightarrow +\infty$ to the signed distance function from the zero level set of the initial data.
\end{abstract}

\section{Introduction}
This article is concerned with the study of the long time behaviour for a class of evolutive Hamilton-Jacobi equations. In particular the model equation we consider is known as \emph{reinitialization of the distance function} and it has the following form: 
\begin{equation}\label{reinit}
\left\{
\begin{array}{ll}
u_t + f(u_0(x)) (|\nabla u| - 1) =0 & \mbox{in }\R^n \times (0,+\infty)\\
u(x,0) = u_0(x) & \mbox{in }\R^n\, ,
\end{array}
\right.
\end{equation}
where  $f(x) = \frac{x}{\sqrt{x^2 + \delta^2}}$ is a regularized version of the sign function for small $\delta > 0$. Additionally $u_0$ vanishes on a $n-1$ dimensional manifold denoted by $\Gamma$ and it is positive outside and negative inside (see Figure \ref{picture}).

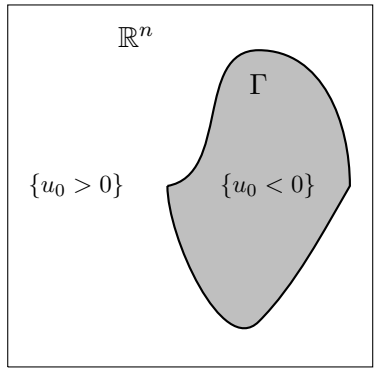
\begin{figure}[!h]
\centering
\begin{tikzpicture}[>=stealth, scale=0.6]
\draw (-3.5,-4) to (-3.5,4) to (4.5,4) to (4.5,-4) to (-3.5,-4); 
\draw  (0,0) [thick][black] [thick, fill=lightgray]
to[out=10,in=180, looseness=1] (2,3) node[black, scale=1] [below=5pt]{$\Gamma$}  node[black, scale=1] [above=5pt][left=35pt]{$\R^n$}    
to[out=0,in=90, looseness=1] (4,0)  node[black, scale=0.85] [left=11pt]{$\{u_0 < 0\}$} 
to[out=-120,in=45, looseness=0.8] (2,-3) 
to[out=225,in=-90, looseness=0.8] (0,0) node[black, scale=0.85] [left=15pt]{$\{u_0 > 0\}$}; 
\end{tikzpicture}
\caption{The zero level set of $u_0$}
\label{picture}
\end{figure}

This equation was introduced by Sussman, Smereka and Osher in \cite{ALS} in the context of the level set methods for incompressible two phase-flow and it is employed as a tool in all the numerical algorithms based on level set evolution (we refer to \cite{FPWC} for the fundational article on the level set method and to \cite{ACW},\cite{MLS}, \cite{DIBO}, \cite{LSMF} for examples of algorithms using level set methods and requiring the reinitialization of the distance function). 
The common feature of these algorithms is that they become numerically unstable when the gradient of the function that describes the evolving interface approaches zero. Therefore, in order to recover an efficient numerical scheme, it is customary to update it to be the sign distance function from the traced interface.
This is exactly the goal of Equation \eqref{reinit}.
To be more precise one can observe that, as $f(u_0(x)) = 0$ on $\Gamma$, the zero level set of the solution should be preserved during the evolution.
Hence, at least heuristically, the solution of \eqref{reinit} converges, as $t\rightarrow +\infty$, to the solution of the following eikonal equation (the steady state of \eqref{reinit}):
\begin{equation}\label{ei}
\left\{
\begin{array}{ll}
|\nabla \phi| =1 & \mbox{in }\R^n \setminus \Gamma\\
\phi(x) = 0 & \mbox{in }\Gamma\, ,
\end{array}
\right.
\end{equation}
which amdits as a solution the signed distance function from $\Gamma$.
The goal of this article is to formalize this heuristic observation in the framework of viscosity solutions (\cite{VSOH}, \cite{SPOV}) and in a more general setting. 

A related result about the reinitialization of the distance function is due to Hamamuki and Ntovoris in \cite{ARSF}. In this paper they build a framework to justify in a rigorous way the use of the reinizialitation procedure. In particular they couple a standard evolutive Hamilton-Jacobi equation with a variant of the reinitialization of the distance function and they consider the solution given by the alternate application of the two equations for time intervals time of lenght $k_1\varepsilon$ and $k_2\varepsilon$, respectively. Then they study the homogeneized equation for $\varepsilon \rightarrow 0$ and they recover the evolution of the distance function as $k_2/k_1 \rightarrow +\infty$. 
On the contrary, we focus on the long time behaviour of a suitable generalization of \eqref{reinit} using an approach that resembles more the work of Namah and Roqueouffre (\cite{ROT}) on coercive Hamilton-Jacobi equations in bounded domains. Additionally, the equation considered in \cite{ARSF} is substantially different (from a theoretical point of view) from Equation \eqref{reinit}: indeed the authors substitute the term $f(u_0(x))$ with $f(u(x))$ producing an Hamiltonian that is not depending explicitly on $x$ but on $u$.

We consider the following Hamilton-Jacobi equation that is a suitable generalization of \eqref{reinit}:
\begin{equation}\label{intr}
\left\{
\begin{array}{ll}
u_t + f(x) H(\|\nabla u\|)=0 & \mbox{in }\R^n \times (0,+\infty)\\
u(x,0) = u_0(x) & \mbox{in }\R^n\,,
\end{array}
\right.
\end{equation}
where $\|\cdot\|$ is an arbitrary norm in $\R^n$, $H(p) = 0$ if and only if $p=1$ and $f:\R^n \rightarrow \R$ is a bounded Lipschitz function that has the same sign as $u_0$. 
Denoting by $d_{\|\cdot\|_\star}(x,\Gamma)$ the signed distance function from $\Gamma$ with respect to the dual norm $\|\cdot\|_\star$ we prove the following result.

\begin{thmnonumber}
The viscosity solution of \eqref{intr} converges uniformly to $d_{\|\cdot\|_\star}(x,\Gamma)$ as $t\rightarrow \infty$ on every compact set of $\R^n$.
\end{thmnonumber}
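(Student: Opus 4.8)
Write $d:=d_{\|\cdot\|_\star}(\cdot,\Gamma)$, $\Omega^{\pm}:=\{\pm u_0>0\}$ and $\bar L:=\max\{1,\mathrm{Lip}_{\|\cdot\|_\star}(u_0)\}$, and recall the two structural facts that will do all the work: $d$ solves the eikonal equation $\|\nabla\phi\|=1$ in $\R^n\setminus\Gamma$ in the viscosity sense with $d=0$ on $\Gamma$, and $|f(x)|\le L\,\dist(x,\Gamma)=L\,|d(x)|$ for a constant $L$ (because $f$ is Lipschitz and vanishes on $\Gamma$). The first step is to trap $u$ between two stationary barriers. I claim that
\[ \underline{w}(x):=\bar L\min\{d(x),0\}\qquad\text{and}\qquad \overline{w}(x):=\bar L\max\{d(x),0\} \]
are, respectively, a viscosity sub- and supersolution of the \emph{stationary} equation $f(x)H(\|\nabla\phi\|)=0$ in $\R^n$, and that $\underline{w}\le u_0\le\overline{w}$. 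Granting this, the comparison principle for \eqref{intr} gives $\underline{w}\le u(\cdot,t)\le\overline{w}$ for all $t\ge0$; in particular $u(\cdot,t)=0$ on $\Gamma$, $|u(x,t)|\le\bar L|d(x)|$ for all $(x,t)$ — so $u(\cdot,t)$ is equi-continuous at $\Gamma$ uniformly in $t$ — and $\mathrm{sign}\,u(x,t)=\mathrm{sign}\,u_0(x)$. To check the claim for $\overline w$: on $\Omega^+$ one has $\|\nabla\overline w\|=\bar L\ge1$ at points of differentiability of $d$, hence $H(\bar L)\ge0$ and $f>0$ give $fH(\bar L)\ge0$, while at the ridge of $\Gamma$ the function $d$ is semiconcave, so its subdifferential is empty there and the supersolution test is vacuous; on $\Omega^-$ one has $\overline w\equiv0$ and $f(x)H(0)>0$ because $f<0$ and $H(0)<0$; and at points of $\Gamma$ the equation holds trivially since $f$ vanishes. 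The inequality $u_0\le\overline w$ on $\Omega^+$ is the elementary estimate $u_0(x)=u_0(x)-u_0(p)\le\|x-p\|_\star$ (for $p\in\Gamma$) together with $d(x)=\dist_{\|\cdot\|_\star}(x,\Gamma)$, and on $\overline{\Omega^-}$ it is trivial since $\overline w\ge0\ge u_0$. The statements for $\underline w$ are symmetric.

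For the long-time behaviour I would then use, in the spirit of Namah--Roquejoffre, the half-relaxed limits $\overline u(x):=\limsup^{*}_{t\to+\infty}u$ and $\underline u(x):=\liminf_{*,\,t\to+\infty}u$. By the uniform bound above they are finite, $|\overline u|,|\underline u|\le\bar L|d|$, they vanish on $\Gamma$, and by the classical stability of half-relaxed limits for evolutive Hamilton--Jacobi equations $\overline u$ is a subsolution and $\underline u$ a supersolution of $f(x)H(\|\nabla\phi\|)=0$ in $\R^n$. On $\Omega^+$, dividing the subsolution (resp.\ supersolution) inequality by $f>0$ and using the sign properties of $H$ (namely $H<0$ on $[0,1)$ and $H>0$ on $(1,\infty)$) shows that $\overline u$ is a subsolution and $\underline u$ a supersolution of $\|\nabla\phi\|=1$ in $\Omega^+$. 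Since $\overline u$ is then $1$-Lipschitz with respect to $\|\cdot\|_\star$ and vanishes on $\partial\Omega^+=\Gamma$, it satisfies $\overline u\le\dist_{\|\cdot\|_\star}(\cdot,\Gamma)=d$ on $\Omega^+$, while the comparison principle for the eikonal equation gives $\underline u\ge d$; hence $\overline u=\underline u=d$ on $\Omega^+$. Applying the same argument to $-u$ (which solves \eqref{intr} with $f$ and $u_0$ replaced by $-f$ and $-u_0$, so that the associated signed distance is $-d$) gives $\overline u=\underline u=d$ on $\Omega^-$ as well, and on $\Gamma$ we already know $\overline u=\underline u=0=d$. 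Thus $\overline u=\underline u=d$ on all of $\R^n$, and since this common value is continuous the standard half-relaxed limit lemma yields $u(\cdot,t)\to d$ uniformly on compact subsets of $\R^n$ as $t\to+\infty$, which is the assertion.

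The crux is the first step: it is precisely the conjunction of $|f|\le L|d|$, the sign of $f$, the sign of $H$ and the eikonal identity $\|\nabla d\|=1$ that makes the explicit functions $\bar L\max\{d,0\}$ and $\bar L\min\{d,0\}$ stationary sub/supersolutions, and this is what upgrades the crude $L^\infty$-trapping into the uniform-in-time control of $u$ near $\Gamma$ required by the half-relaxed limit argument; it also spares us from proving a uniform-in-time global gradient bound for $u(\cdot,t)$, which would otherwise be the main technical difficulty. The remaining ingredients — the comparison principle and finite speed of propagation for the non-coercive equation \eqref{intr}, the stability of half-relaxed limits, and the comparison principle for the eikonal equation on $\Omega^{\pm}$ — are either established in the paper or entirely classical.
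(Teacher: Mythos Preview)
Your argument is correct and follows the same overall architecture as the paper (barriers to control $u$ near $\Gamma$, half-relaxed limits, reduction to the eikonal equation on $D_\pm$, and the uniqueness statement of Corollary~\ref{firstbound}), but your barrier construction is genuinely different and simpler. The paper builds time-dependent barriers $v^\star,v_\star$ that equal $k_1u_0$ in a tube around $\Gamma$, carry an exponential factor $e^{k_2t(u_0\mp\sigma)^2}$ outside, and use the auxiliary function $\widetilde u$ from $(\textbf{H4})$ on the opposite side of $\Gamma$; since these blow up in $t$ away from $\Gamma$, the paper must combine them with the gradient estimate of Proposition~\ref{unifo} to obtain the local-in-$x$, uniform-in-$t$ boundedness needed for the half-relaxed limits (Corollary~\ref{localbo}). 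Your stationary choices $\bar L\max\{d,0\}$ and $\bar L\min\{d,0\}$ bypass all of this: they directly give $|u(x,t)|\le \bar L\,|d(x)|$ for every $t$, which simultaneously yields local boundedness, preservation of the zero level set, and the nonnegativity of $\underline u$ on $D_+$ required by Corollary~\ref{firstbound}. Your verification that these are viscosity super/subsolutions is sound (on $D_+$ it follows from $d$ being a viscosity supersolution of $\|\nabla d\|=1$, and the ridge argument via semiconcavity is also valid; on $D_-$ one uses $H(0)<0$, which indeed follows from $(\textbf{H4})$, $(\textbf{H5})$ and continuity of $H$). A final minor remark: the direct $t\to+\infty$ half-relaxed limit you take is equivalent to the paper's rescaling $u^\varepsilon(x,t)=u(x,t/\varepsilon)$, $\varepsilon\to0$, but the latter makes the stability step (passage to the stationary equation) entirely routine; and ``finite speed of propagation'' in your closing paragraph is not actually needed anywhere.
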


We remark that apart from the structural hypothesis of $H$ that permit to recover the signed distance function in the limit (see hypothesis $(\textbf{H4})$ and $(\textbf{H5})$) and the classical assumptions at infinity (see hypothesis $(\textbf{H2})$ and $(\textbf{H3})$), we do not require addition regularity on $H$ (for example convexity).

The strategy to prove the stated result is based on the method of the half-relaxed limits for viscosity solutions. 
As in \cite{ROT}, we consider the following rescaled equation for $\varepsilon > 0$:
\begin{equation}\label{fast}
\left\{
\begin{array}{ll}
\epsilon u_t^\varepsilon + f(x) H(\|\nabla u^\varepsilon\|)=0 & \mbox{in } \R^n \times (0,+\infty)\\
u^\varepsilon(x,0) = u_0(x) & \mbox{in } \R^n\, ,
\end{array}
\right.
\end{equation}
where $u^\varepsilon (x,t) = u(x,t/\varepsilon)$ and then we prove that the half-relaxed limits of $u^\varepsilon (x,t)$ coincide with the signed distance function from $\Gamma$.
The striking difference to \cite{ROT} lies in the lack of coercivity of the Hamiltonian $f(x)H(\|\nabla u\|)$, due to the fact that $f(x) = 0$ on $\Gamma$. This does not allow to have a priori bounds of the gradient.
In order to overcome this difficulty we costruct barriers to the viscosity solution of \eqref{intr} that do not depend on $t$ in $\Gamma_\sigma$, a $\sigma$-neigheborhood of $\Gamma$ (see Proposition \ref{barrier}). More precisely we build two functions $v_\star$ and $v^\star$ that are viscosity subsolution (resp. supersolution) of \eqref{intr} positive outside $\Gamma$ and negative inside. 
In this way we manage to control the half relaxed limits of $u^\varepsilon (x,t)$ (and their zero level set) in $\Gamma_\sigma$. Outside $\Gamma_\sigma$ the Hamiltonian is coercive, thus we can apply suitable a priori estimate for the gradient (see Proposition \ref{unifo}).

Finally, in Theorem \ref{mainres} we prove the desired result, showing that the half-relaxed limits of $u^\varepsilon (x,t)$ coincide with the signed distance function. We employ a modification of the strong comparison principle for discontinuous viscosity solutions on unbounded domain, that allows to conclude that the signed distance function with respect to the dual norm $\|\cdot\|_\star$ is the only \emph{positive} discontinous viscosity solution of the eikonal equation (Corollary \ref{firstbound}).

\section{Setting and a priori estimates for the viscosity solution}
As anticipated in the introduction we consider the following Cauchy problem for an Hamilton-Jacobi equation:
\begin{equation}\label{eqn}
\left\{
\begin{array}{ll}
u_t + f(x) H(\|\nabla u\|)=0 & \mbox{in }\R^n \times (0,+\infty)\\
u(x,0) = u_0(x) & \mbox{in }\R^n\,,
\end{array}
\right.
\end{equation}
where $u_0 \in C^1(\R^n)$, $\|\cdot\|$ is an arbitrary norm in $\R^n$ and $n \geq 2$.
 
We denote by $\Gamma$ the zero level set of $u_0$:
\begin{equation*}
\Gamma:= \{x\in \R^n: u_0(x) = 0\}\, .
\end{equation*}
Moreover we call $D_+$ and $D_-$ the external and the internal part of $\Gamma$ (see Figure \ref{picture}):
\begin{equation*}
D_+:=\{x\in \R^n: u_0(x) > 0\} \quad D_-:= \{x\in \R^n: u_0(x) < 0\}\,.
\end{equation*}
Finally we denote by $\|\cdot\|_\star$ the dual norm of $\|\cdot\|$ and by $d_{\|\cdot\|}(x,\Gamma)$ the signed distance function from $\Gamma$ with respect to the norm $\|\cdot\|$, namely
\begin{displaymath}
d_{\|\cdot\|}(x,\Gamma) := 
\left\{\begin{array}{ll}
\inf\{\|x - y\| : y \in \Gamma\} & \mbox{if } x \in D_+\\
-\inf\{\|x - y\| : y \in \Gamma\} & \mbox{if } x \in D_-\, .
\end{array}
\right.
\end{displaymath}

\subsection{Assumptions on $f,u_0$ and $H$}
We assume the following hypothesis on $f(x)$ and on the initial data $u_0$:
\begin{itemize}
\item[(\textbf{G1})] $f$ is Lipschitz with Lipschitz constant $L>0$,
\vspace*{2mm}
\item[(\textbf{G2})] $\|f\|_\infty \leq C_1$,
\vspace*{2mm}
\item[(\textbf{G3})] $\Gamma=\{x\in \R^n: f(x) = 0\}, \quad D_+=\{x\in \R^n: f(x) > 0\}$, $\quad  D_-=\{x\in \R^n: f(x) < 0\}$,
\vspace*{2mm}
\item[(\textbf{G4})] $\inf_{x\in \Gamma} \|\nabla u_0(x)\| > 0$,
\vspace*{2mm}
\item[(\textbf{G5})] $\|\nabla u_0\|_\infty \leq C_2$. 
\end{itemize}
Moreover we make the following assumptions on the Hamiltonian $H$:
\begin{itemize}
\item [(\textbf{H1})] $H\in UC(B_R(0))$ for every $R>0$,
\vspace*{2mm}
\item [(\textbf{H2})] $\lim_{p\rightarrow +\infty} H(p) = + \infty$,
\vspace*{2mm}
\item [(\textbf{H3})] $|H(p)| \leq C_3 (1 + p)$,
\vspace*{2mm}
\item [(\textbf{H4})] There exists $\widetilde u \in C^1(\R^n)$ such that $0<\widetilde u < u_0$ in $D_+$, $u_0 < \widetilde u < 0$ in $D_-$ and $H(\|\nabla \widetilde u\|) \leq \alpha < 0$.
\vspace*{2mm}
\item [(\textbf{H5})] $H(p) = 0$ if and only if $p = 1$.
\end{itemize}
\begin{rmk}
The Hamilton-Jacobi equation for the reinitialization of the distance function \eqref{reinit} satisfies the previous hypothesis with 
\begin{equation*}
H(p) = p - 1 \quad \mbox{and} \quad f(x) =  \frac{u_0(x)}{\sqrt{u_0(x)^2 + \delta^2}}
\end{equation*}
for $\delta > 0$.
\end{rmk}

\begin{rmk}
The existence of $\widetilde u$ ensured by assumption $(\textbf{H4})$ will be employed in the construction of the barriers (see Proposition \ref{barrier}). We remark that it is not very restrictive: for example it holds for any Hamiltonian such that $H(p) < 0$ if $p<1$. Indeed, thanks to hypothesis $(\textbf{G5})$ it is enough to choose $\widetilde u = cu_0$ for an appropriate choice of the constant $c>0$.
\end{rmk}

\subsection{Existence of a unique solution and a priori estimates for the gradient}
The existence and uniqueness of the viscosity solution of \eqref{eqn} follow from the classical comparison principle for viscosity solutions of Hamilton-Jacobi equations and Perron's method developed by Ishii in \cite{PMFH}. 

For reader convenience we state the version of the comparison principle we use
(see for example \cite{OCA}, \cite{SDV}).
\begin{thm}\label{comparison}
Consider the following Hamilton-Jacobi equation
\begin{equation}\label{rr}
u_t + H(x,\nabla u) = 0 \quad (x,t) \in \R^n \times (0,T).
\end{equation}
Assume $H \in UC(\R^n \times B_R(0))$ for every $R>0$ and suppose that 
there exists a modulus of continuity $m:[0,+\infty) \rightarrow [0,+\infty)$ such that 
\begin{equation}
|H(x,p) - H(y,p)| \leq m(|x-y|(1+|p|)) \quad \forall \ x,y,p \in \R^n \, . 
\end{equation}
Let $u_1, u_2 \in UC([0,T]\times \R^n)$ be viscosity sub- and supersolution of \eqref{rr} respectively; then we have
\begin{displaymath}
\sup_{(x,t)\in  \R^n \times (0,T)} (u_1(x,t) - u_2(x,t)) \leq \sup_{x \in \R^n}(u_1(x,0) - u_2(x,0)).
\end{displaymath}
\end{thm}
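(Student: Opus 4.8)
The plan is to prove this classical comparison principle by the doubling-of-variables technique (Crandall--Ishii--Lions), adapted to the unbounded domain $\R^n$. I would argue by contradiction: if the inequality failed, there would be a point $(x_*,t_*)\in\R^n\times(0,T)$ and a constant $\sigma_1>\sigma_0:=\sup_{\R^n}\bigl(u_1(\cdot,0)-u_2(\cdot,0)\bigr)$ with $(u_1-u_2)(x_*,t_*)>\sigma_1$, and the goal is to reach a point where the sub- and supersolution inequalities, tested against one another, are incompatible. Since the domain is unbounded and $u_1,u_2$ are only uniformly continuous (hence of at most linear growth and possibly unbounded), the doubling functional must be localised in space and pushed away from the terminal time: for parameters $\varepsilon,\eta,\beta>0$ I would study
\[
\Phi(x,y,t,s):=u_1(x,t)-u_2(y,s)-\frac{|x-y|^2+|t-s|^2}{2\varepsilon}-\frac{\eta}{T-t}-\beta\bigl(\langle x\rangle+\langle y\rangle\bigr),\qquad \langle z\rangle:=\sqrt{1+|z|^2}.
\]
The term $-\beta(\langle x\rangle+\langle y\rangle)$ forces $\Phi$ to attain a maximum at some $(\bar x,\bar y,\bar t,\bar s)$ depending on the parameters, the term $-\eta/(T-t)$ keeps $\bar t$ bounded away from $T$, and $\eta,\beta$ are then fixed small enough that $\Phi(x_*,x_*,t_*,t_*)>\sigma_1$, so that $\max\Phi>\sigma_1>\sigma_0$.

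I would then carry out the standard a priori analysis of the maximiser. Comparing $\Phi(\bar x,\bar y,\bar t,\bar s)$ with $\Phi(\bar y,\bar y,\bar t,\bar s)$ and with $\Phi(\bar x,\bar x,\bar t,\bar t)$, and invoking the moduli of continuity of $u_1$ and $u_2$, yields, for $\eta,\beta$ fixed,
\[
|\bar x-\bar y|\to0,\qquad |\bar t-\bar s|\to0,\qquad \frac{|\bar x-\bar y|^2+|\bar t-\bar s|^2}{\varepsilon}\to0\qquad\text{as }\varepsilon\to0 .
\]
Since $\max\Phi>\sigma_0$ while $u_1(\cdot,0),u_2(\cdot,0)$ are uniformly continuous, for $\varepsilon$ small one has $\bar t,\bar s>0$, and $\bar t<T$ because of the $\eta/(T-t)$ term; hence $(\bar x,\bar t)$ and $(\bar y,\bar s)$ are interior points.

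Next I would invoke the definitions of viscosity sub- and supersolution. With $p:=(\bar x-\bar y)/\varepsilon$, the map $(x,t)\mapsto u_1(x,t)-\tfrac{|x-\bar y|^2+|t-\bar s|^2}{2\varepsilon}-\tfrac{\eta}{T-t}-\beta\langle x\rangle$ has an interior maximum at $(\bar x,\bar t)$ and $(y,s)\mapsto u_2(y,s)+\tfrac{|\bar x-y|^2+|\bar t-s|^2}{2\varepsilon}+\beta\langle y\rangle$ has an interior minimum at $(\bar y,\bar s)$, hence
\[
\frac{\bar t-\bar s}{\varepsilon}+\frac{\eta}{(T-\bar t)^2}+H\bigl(\bar x,p+\beta\nabla\langle\bar x\rangle\bigr)\le0,\qquad \frac{\bar t-\bar s}{\varepsilon}+H\bigl(\bar y,p-\beta\nabla\langle\bar y\rangle\bigr)\ge0 .
\]
Subtracting and using $\bar t\ge0$,
\[
0<\frac{\eta}{T^2}\le H\bigl(\bar y,p-\beta\nabla\langle\bar y\rangle\bigr)-H\bigl(\bar x,p+\beta\nabla\langle\bar x\rangle\bigr),
\]
and I would estimate the right-hand side by adding and subtracting $H(\bar x,p-\beta\nabla\langle\bar y\rangle)$: the $x$-increment is bounded by $m\bigl(|\bar x-\bar y|(1+|p|+\beta)\bigr)$, which tends to $0$ as $\varepsilon\to0$ because $|\bar x-\bar y|\,|p|=|\bar x-\bar y|^2/\varepsilon\to0$ and $m$ is a modulus, while the $p$-increment compares $H(\bar x,\cdot)$ at two momenta differing by at most $2\beta$ and tends to $0$ as $\beta\to0$ by uniform continuity of $H$ in the momentum. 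Letting $\varepsilon\to0$ and then $\beta\to0$ thus makes the right-hand side vanish while the left-hand side remains $\ge\eta/T^2>0$: the contradiction proves the theorem.

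The step I expect to require genuine care is the last estimate of the $p$-increment $H(\bar x,p-\beta\nabla\langle\bar y\rangle)-H(\bar x,p+\beta\nabla\langle\bar x\rangle)$, and this is exactly the obstruction emphasised in the introduction. Because the Hamiltonian $f(x)H(\|\cdot\|)$ of \eqref{eqn} is \emph{not} coercive, degenerating on $\Gamma$ where $f=0$, one cannot extract an a priori bound on the penalisation slope $|p|=|\bar x-\bar y|/\varepsilon$ from the equation alone, so $p$ may range over an unbounded set and the hypothesis ``$H\in UC(\R^n\times B_R(0))$ for every $R$'' does not by itself keep the $p$-increment small. In the way the comparison principle is applied in this article the difficulty disappears because the relevant sub- and supersolutions are Lipschitz, so the penalisation estimate already gives $|p|\le 2(\mathrm{Lip}(u_1)+\beta)$ and confines $p$ to a fixed ball on which the local modulus of continuity of $H$ in $p$ is available; arranging the order of the limits $\varepsilon\to0$, $\beta\to0$ (and, when $u_1-u_2$ is not bounded above, handling the behaviour at spatial infinity) is the only remaining bookkeeping.
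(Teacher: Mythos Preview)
The paper does not prove this theorem. It is stated ``for reader convenience'' with references to the literature (\cite{OCA}, \cite{SDV}) and then used as a tool; there is no argument in the paper to compare your proposal against. Your outline is the standard doubling-of-variables proof one finds in those references, and the steps you list are the right ones.

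One remark on the technical worry you raise at the end. The concern about controlling the $p$-increment is a feature of the \emph{general} comparison principle for UC sub/supersolutions, not of the specific Hamiltonian $f(x)H(\|\cdot\|)$ of this paper: Theorem~\ref{comparison} is stated for an abstract $H(x,p)$ and does not see the degeneracy on $\Gamma$. Under the hypotheses as written (only $H\in UC(\R^n\times B_R(0))$ in the momentum), one does need some device to keep $p=(\bar x-\bar y)/\varepsilon$ in a fixed ball, and the cleanest way---which is what the cited references do---is exactly the observation you make: for uniformly continuous $u_1$ one gets $\tfrac{|\bar x-\bar y|^2}{2\varepsilon}\le \omega_{u_1}(|\bar x-\bar y|)+\beta|\bar x-\bar y|$, and when $u_1$ is Lipschitz this yields $|p|\le 2(\mathrm{Lip}(u_1)+\beta)$. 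In the applications within the paper (Proposition~\ref{unifo}, the barrier comparisons) the sub/supersolutions are indeed Lipschitz, so your resolution is adequate for everything the paper actually needs. If you want the theorem in the full UC generality of its statement, the references handle this either by strengthening the $p$-regularity of $H$ or by a more delicate ordering of the limits; but that refinement is orthogonal to the content of this article.
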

As a consequence of hypothesis $(\textbf{G1})$, $(\textbf{H1})$ and $(\textbf{H3})$ the comparison principle holds for our class of Hamilton-Jacobi equations. 

\medskip

In order to prove existence of a viscosity solution for \eqref{eqn} it is enough to apply the Perron's method for Hamilton-Jacobi equations. In particular we have to exhibit a subsolution $u_*$ and a supersolution $u^*$ of \eqref{eqn} such that
\begin{equation}\label{boh}
u_*(x,0) \leq u_0(x) \leq u^*(x,0)
\end{equation}
for every $x\in \R^n$. 

One can readily verify, thanks to hypothesis (\textbf{G5}) and (\textbf{G2}), that there exists $C>0$ such that, $u^*(x,t) = u_0(x) + Ct$ and $u_*(x,t) = u_0(x) - Ct$ are a supersolution and a subsolution of \eqref{eqn} respectively and they satisfy \eqref{boh}.

\medskip

Hence there exists a unique viscosity $u \in UC((0,+
\infty) \times \R^n)$ for \eqref{eqn}.

\begin{prop}[A priori estimates]\label{unifo}
Given $u$ the viscosity solution of \eqref{eqn} there exists $C>0$ such that
\begin{equation}\label{onelip}
\|u_t\|_{L^{\infty}(\R^n \times \R_+)} \leq C 
\end{equation}
and for almost every $x \in \R^n \setminus \Gamma$ 
\begin{equation}\label{twolip}
\|\nabla u(x,\cdot)\|_{L^{\infty}(\R_+)} \leq \frac{C}{|f(x)|}\, .
\end{equation}
\end{prop}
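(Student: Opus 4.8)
The plan is to prove \eqref{onelip} first, by a comparison argument, and then to deduce \eqref{twolip} from it by exploiting the coercivity of $H$ on the region where $f$ does not vanish.

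For the time derivative: recall from the discussion preceding the statement that $u^*(x,t)=u_0(x)+Ct$ and $u_*(x,t)=u_0(x)-Ct$ are a super- and a subsolution of \eqref{eqn} agreeing with $u_0$ at $t=0$, so Theorem~\ref{comparison} gives $|u(x,t)-u_0(x)|\le Ct$ for all $(x,t)$. Now fix $h>0$. Since \eqref{eqn} is autonomous in $t$, the function $v(x,t):=u(x,t+h)$ is again a viscosity solution, with initial datum $v(\cdot,0)=u(\cdot,h)$; and since the Hamiltonian $f(x)H(\|\nabla u\|)$ does not depend on $u$, the functions $u(x,t)\pm Ch$ are also viscosity solutions of \eqref{eqn}. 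Their initial data $u_0(x)\pm Ch$ bracket $v(\cdot,0)$ because $|u(x,h)-u_0(x)|\le Ch$, so two applications of Theorem~\ref{comparison} yield $|u(x,t+h)-u(x,t)|\le Ch$ for every $x$, $t$ and $h>0$. Hence $u$ is $C$-Lipschitz in $t$, which is \eqref{onelip}.

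For the gradient: fix $x_0\in\R^n\setminus\Gamma$; by $(\textbf{G3})$ and continuity there are $\rho,\mu>0$ with $|f|\ge\mu$ on $B:=B_\rho(x_0)$, where $f$ has constant sign. Combining the viscosity formulation of \eqref{eqn} on $B\times\R_+$ with the bound $|u_t|\le C$ from \eqref{onelip} (a test function touching $u$ from above at a point has, by the time-Lipschitz bound, time-derivative $\ge -C$ there, and symmetrically from below), one checks that for each fixed $t$ the function $u(\cdot,t)$ if $f>0$ on $B$, resp. $-u(\cdot,t)$ if $f<0$ on $B$, is a continuous viscosity subsolution of $H(\|\nabla w\|)\le C/\mu$ on $B$. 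By $(\textbf{H2})$ the sublevel set $\{p:H(\|p\|)\le C/\mu\}$ is bounded, and a continuous viscosity subsolution of a coercive first-order equation is locally Lipschitz, with local Lipschitz constant bounded by $\sup\{\|p\|:H(\|p\|)\le C/\mu\}$. Shrinking $\rho$ so that $\mu\uparrow|f(x_0)|$, we conclude that $u\in W^{1,\infty}_{\mathrm{loc}}$ on $(\R^n\setminus\Gamma)\times\R_+$; hence $u$ is differentiable a.e.\ there and satisfies \eqref{eqn} classically at such points, where $|H(\|\nabla u(x,t)\|)|=|u_t(x,t)|/|f(x)|\le C/|f(x)|$. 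In view of the behaviour of $H$ at infinity this yields $\|\nabla u(x,t)\|\le C/|f(x)|$ after enlarging $C$, which is \eqref{twolip}.

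The time estimate is routine; the substantive point is the last paragraph, namely converting the viscosity inequality $|H(\|\nabla u\|)|\le C/|f(x)|$ into a genuine pointwise gradient bound. Since a priori $u$ is only uniformly continuous, one cannot merely differentiate the equation: one must invoke the regularizing effect of coercivity, and because $f(x)H(\|\nabla u\|)$ degenerates as $x\to\Gamma$, the argument has to be localized on the sets $\{|f|\ge\mu\}$ with explicit control of how the Lipschitz constant deteriorates as $\mu\to0$. This is also the reason \eqref{twolip} is intrinsically local — it is vacuous on $\Gamma$, where the barrier construction of Proposition~\ref{barrier} takes over instead.
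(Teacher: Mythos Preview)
Your argument is correct and follows essentially the same route as the paper: the time-Lipschitz bound via comparison with $u_0\pm Ct$ and translation invariance in $t$, then the spatial gradient bound by combining $|u_t|\le C$ with the coercivity of $H$ on regions where $|f|$ is bounded below. The only packaging difference is that the paper obtains local Lipschitz regularity in $x$ by citing an external result and then finishes by contradiction, whereas you sketch the coercive-subsolution argument directly; both arrive at $|H(\|\nabla u\|)|\le C/|f(x)|$ a.e.\ and invoke \textbf{(H2)} to convert this into a gradient bound.
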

\begin{proof}
Let $u(x,t)$ be a viscosity solution of $\eqref{eqn}$. For $h>0$ we have that also $u(x,t+h)$ is viscosity solution of \eqref{eqn}. Therefore by the comparison principle (Theorem \ref{comparison}) there holds 
\begin{equation}\label{linf}
\sup_{(x,t) \in \R^n \times \R_+}|u(x,t+h) - u(x,t)| \leq \sup_{x\in \R^n} |u(x,h) - u(x,0)|\, .
\end{equation}
Moreover thanks to hypothesis (\textbf{G5}) there exists $C>0$ such that, $u^*(x,t) = u_0(x) + Ct$ and $u_*(x,t) = u_0(x) - Ct$ are a supersolution and a subsolution of \eqref{eqn} respectively and they satisfy \eqref{boh}. Hence by the comparison principle we obtain
\begin{displaymath}
u_*(x,t) \leq u(x,t) \leq u^*(x,t) \quad \mbox{in } \R^n \times [0,+\infty)\,. 
\end{displaymath}
Thanks to \eqref{linf} we have
\begin{equation*}
\sup_{(x,t) \in \R^n \times \R_+} |u(x,t+h) - u(x,t)| \leq Ch
\end{equation*}
which implies that for every $x \in \R^n$, $u(x,t)$ is Lipschitz in $t$ and $\|u_t\|_{L^\infty(\R^n \times \R_+)} \leq C$.
In order to prove \eqref{twolip} we notice that by standard regularity results for Hamilton-Jacobi equations (see for example \cite{AITT}, Theorem 8.1), the viscosity solution $u(x,t)$ is locally Lipschitz in $x$. Hence, as a consequence of \eqref{onelip}, we infer that there exists $C>0$ such that
\begin{equation}\label{hypcontr}
H(\|\nabla u(x,t)\|) \leq \frac{C}{|f(x)|} 
\end{equation}
for almost every $(x,t) \in \R^n$ with $x\notin \Gamma$.

We will prove that there exists $C>0$ such that for every $\sigma > 0$ 
\begin{equation}\label{contr}
\|\nabla u(x,t)\| \leq \frac{C}{|f(x)|} \quad \mbox{a.e. in } (\R^n\setminus \Gamma_\sigma) \times \R_+\,.
\end{equation}
Then \eqref{twolip} follows from the arbitrariety of $\sigma$.
Suppose that \eqref{contr} does not hold. Then there exists $\sigma > 0$ such that for every $C>0$ we can find $A\subset (\R^n\setminus \Gamma_\sigma) \times \R_+$ of positive measure with 
\begin{equation*}
\|\nabla u(x,t)\| \geq \frac{C}{|f(x)|} \quad \mbox{for every } (x,t) \in A\, .
\end{equation*}
Hence, as $f(x)$ is bounded away from zero outside $\Gamma_\sigma$ (see assumption (\textbf{G3})) and thanks to hypothesis (\textbf{H2}), we have a contradiction with 
\eqref{hypcontr}.
\end{proof}

\begin{rmk}\label{nocoerc}
As estimate \eqref{twolip} shows, in contrast with \cite{ROT}, we cannot rely on uniform Lipschitz estimate up to $\Gamma$. This is a consequence of the lack of coercivity of the Hamiltonian close to $\Gamma$. 
\end{rmk}

\section{The eikonal equation}
This section is devoted to the study of the eikonal equation for a general norm $\|\cdot\|$ in unbounded domains. Given $\Omega \subset \R^n$ an open set (possibly unbounded) we consider the following Hamilton-Jacobi equation:
\begin{equation}\label{stat}
\left\{
\begin{array}{ll}
\|\nabla \phi\|=1 & \mbox{in }\Omega\\
\phi(x) = 0 & \mbox{in } \partial \Omega\,.
\end{array}
\right.
\end{equation}
Notice that even in the simple case of the euclidean distance we do not have uniqueness of solutions when $\Omega$ is unbounded. For example given $\Omega=(0,+\infty)$ it is clear that both $x$ and $-x$ are viscosity solutions of \eqref{stat}. However, at least for the euclidean norm, it is known that uniqueness hold when one is considering only positive viscosity solutions (see for example\cite{ARSF}, Lemma 3.5).

We want to generalize this result for a general norm $\|\cdot\|$ in the framework of discontinous viscosity solutions. 
Before that, we recall the definitions of discontinuous viscosity solutions and the known comparison principle when $\Omega$ is bounded (\cite{OCA},\cite{SDV}).
\begin{defi}[Discontinuous viscosity solutions]\label{viscousdisc}
Given $\Omega \subset \R^n$ open, consider the Hamilton-Jacobi equation:
\begin{equation}\label{hamilton2}
G(x,\nabla \phi)=0  \quad \mbox{in }\Omega\,,
\end{equation}
where $G:\Omega \times \R^n \rightarrow \R$ is a continuous Hamiltonian.

Let $u:\overline \Omega\rightarrow \R$ be locally bounded:
\begin{itemize}
\item[$\star$)]  If $u$ is upper semicontinous, we say that $u$ is a subsolution of \eqref{hamilton2} if for every $x_0 \in \overline\Omega$ and for every $v \in C^\infty(\overline\Omega)$ such that $u-v$ has a maximum in $x_0$ we have
\begin{displaymath}
G(x_0,\nabla v(x_0)) \leq 0.
\end{displaymath}
\item[$\star$)] If $u$ is lower semicontinous, we say that $u$ is a superolution of \eqref{hamilton2} if for every $x_0 \in \overline\Omega$ and for every $v \in C^\infty(\overline\Omega)$ such that $u-v$ has a minimum in $x_0$ we have
\begin{displaymath}
G(x_0,\nabla v(x_0)) \geq 0.
\end{displaymath}
\end{itemize}
A viscosity solution (discontinous) of \eqref{hamilton2} is a locally bounded function $u:\overline \Omega \rightarrow \R$ such that the upper semicontinuous envelope of $u$ is a subsolution of \eqref{hamilton2} and the lower semicontinuous envelope is a supersolution of \eqref{hamilton2}. \end{defi}

\begin{thm}[Comparison principle for discontinuous viscosity solutions]\label{compdisc}
Suppose that $\Omega \subset \R^n$ is an open, bounded set. 
Considering equation in \eqref{hamilton2}, suppose that $G(x,p) \in UC(\R^n \times B_R(0))$ for every $R>0$, it is convex in $p$ for every $x\in \Omega$ and there exists a function $\phi$ of class $C^1$ on $\Omega$ and continuous on $\overline \Omega$ such that
\begin{displaymath}
G(x,\nabla \phi) \leq \alpha < 0. \\
\end{displaymath}
Then given $u, v$ locally bounded in $\overline \Omega$ such that $u$ is a discontinuous subsolution of  \eqref{hamilton2} and $v$ is a discontinuous supersolution of \eqref{hamilton2}, with $u\leq v$ on $\partial \Omega$ we have
\begin{displaymath}
u(x) \leq v(x) \quad \mbox{ for every } x\in \Omega.
\end{displaymath}
\end{thm}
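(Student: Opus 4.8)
The plan is to follow the classical scheme for discontinuous viscosity solutions (as in \cite{OCA}, \cite{SDV}): first reduce to a \emph{strict} subsolution using the convexity of $G$ together with the strict subsolution $\phi$, and then conclude by the doubling of variables adapted to semicontinuous sub/supersolutions.

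\emph{Step 1 (reduction to a strict subsolution).} For $\mu\in(0,1)$ set $u_\mu:=(1-\mu)u+\mu\phi$. Since $\phi\in C(\overline\Omega)$, $u_\mu$ is upper semicontinuous on $\overline\Omega$, and $u_\mu\to u$ pointwise as $\mu\to0$. I would show that $u_\mu$ is a viscosity subsolution of $G(x,\nabla u_\mu)=\mu\alpha$ in $\Omega$: if $\psi\in C^\infty(\overline\Omega)$ and $u_\mu-\psi$ has a local maximum at $x_0\in\Omega$, then $u-\tfrac{1}{1-\mu}(\psi-\mu\phi)$ has a local maximum at $x_0$, and $\tfrac{1}{1-\mu}(\psi-\mu\phi)$ is of class $C^1$ near $x_0$, hence an admissible test function for the subsolution $u$ (one may test subsolutions against $C^1$ functions, or mollify $\phi$ near $x_0$ and pass to the limit using the continuity of $G$); therefore $G\bigl(x_0,\tfrac{1}{1-\mu}(\nabla\psi(x_0)-\mu\nabla\phi(x_0))\bigr)\le0$, and writing $\nabla\psi(x_0)=(1-\mu)\cdot\tfrac{1}{1-\mu}(\nabla\psi(x_0)-\mu\nabla\phi(x_0))+\mu\nabla\phi(x_0)$, the convexity of $G$ in $p$ and $G(x_0,\nabla\phi(x_0))\le\alpha$ give $G(x_0,\nabla\psi(x_0))\le\mu\alpha<0$. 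Moreover, on $\partial\Omega$ one has $u_\mu-v=(1-\mu)(u-v)+\mu(\phi-v)\le\mu C_0$, where $C_0:=\sup_{\partial\Omega}(\phi-v)$ is finite because $\partial\Omega$ is compact, $\phi$ continuous and $v$ lower semicontinuous (hence bounded below) there.

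\emph{Step 2 (doubling).} Fix $\mu$, set $M_\mu:=\sup_{\overline\Omega}(u_\mu-v)$, and suppose by contradiction $M_\mu>\mu C_0$. For $\varepsilon>0$, the function
\[
\Phi_\varepsilon(x,y):=u_\mu(x)-v(y)-\frac{|x-y|^2}{2\varepsilon}
\]
is upper semicontinuous on the compact set $\overline\Omega\times\overline\Omega$, hence attains its maximum at some $(x_\varepsilon,y_\varepsilon)$. By the standard penalization lemma (see e.g.\ \cite{OCA}, \cite{SDV}), $|x_\varepsilon-y_\varepsilon|^2/\varepsilon\to0$, $\Phi_\varepsilon(x_\varepsilon,y_\varepsilon)\to M_\mu$, and, up to a subsequence, $(x_\varepsilon,y_\varepsilon)\to(\bar x,\bar x)$ with $u_\mu(\bar x)-v(\bar x)=M_\mu$. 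Since $M_\mu>\mu C_0\ge\sup_{\partial\Omega}(u_\mu-v)$, necessarily $\bar x\in\Omega$, so $x_\varepsilon,y_\varepsilon\in\Omega$ for $\varepsilon$ small. Using the subsolution property of $u_\mu$ at $x_\varepsilon$ with test function $x\mapsto v(y_\varepsilon)+\tfrac{|x-y_\varepsilon|^2}{2\varepsilon}$, and the supersolution property of $v$ at $y_\varepsilon$ with test function $y\mapsto u_\mu(x_\varepsilon)-\tfrac{|x_\varepsilon-y|^2}{2\varepsilon}$, and setting $p_\varepsilon:=(x_\varepsilon-y_\varepsilon)/\varepsilon$, we get
\[
G(x_\varepsilon,p_\varepsilon)\le\mu\alpha<0\le G(y_\varepsilon,p_\varepsilon),
\]
hence $-\mu\alpha\le G(y_\varepsilon,p_\varepsilon)-G(x_\varepsilon,p_\varepsilon)$.

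\emph{Step 3 (conclusion) and main obstacle.} Since $|x_\varepsilon-y_\varepsilon|\to0$ and $|x_\varepsilon-y_\varepsilon||p_\varepsilon|=|x_\varepsilon-y_\varepsilon|^2/\varepsilon\to0$, the right-hand side above tends to $0$: when $G$ does not depend on $x$ — which is exactly the case of the eikonal Hamiltonian $G(x,p)=\|p\|-1$ in \eqref{stat}, to which the result is applied — it is identically zero, while in general it is controlled by a modulus of continuity of $G$ in $x$ uniform in $p$ (of the type used in Theorem~\ref{comparison}), $G(y_\varepsilon,p_\varepsilon)-G(x_\varepsilon,p_\varepsilon)\le m\bigl(|x_\varepsilon-y_\varepsilon|(1+|p_\varepsilon|)\bigr)\to0$. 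This contradicts $-\mu\alpha>0$, so $M_\mu\le\mu C_0$ for all $\mu\in(0,1)$; letting $\mu\to0$, for every $x\in\Omega$ one has $u(x)-v(x)=\lim_{\mu\to0}(u_\mu(x)-v(x))\le\liminf_{\mu\to0}M_\mu\le\liminf_{\mu\to0}\mu C_0=0$, i.e.\ $u\le v$ in $\Omega$. The delicate points are: the reduction of Step 1, where convexity of $G$ in $p$ is used essentially and produces the quantitative strict gain $-\mu\alpha>0$ that survives the limit $\varepsilon\to0$; and the fact that, unlike in the coercive setting of \cite{ROT}, there is no a priori bound on the adjoint variable $p_\varepsilon$, so the closing estimate must come from the $x$-regularity of $G$ rather than from any control on $|p_\varepsilon|$ — a point that is harmless here because the eikonal Hamiltonian is $x$-independent.
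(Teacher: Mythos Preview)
The paper does not supply its own proof of this statement: Theorem~\ref{compdisc} is quoted as a known comparison principle, with references to \cite{OCA} and \cite{SDV}, and no argument is given. Your proposal reproduces exactly the classical proof that one finds in those references --- convex combination with the strict $C^1$ subsolution $\phi$ to manufacture a \emph{strict} viscosity subsolution $u_\mu$ (this is where convexity of $G$ in $p$ is used), followed by doubling of variables on the compact set $\overline\Omega\times\overline\Omega$ --- and the argument is correct.

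Your observation in Step~3 is also pertinent: the hypothesis ``$G\in UC(\R^n\times B_R(0))$ for every $R>0$'' as written in the paper does not, by itself, control $G(y_\varepsilon,p_\varepsilon)-G(x_\varepsilon,p_\varepsilon)$ without an a~priori bound on $|p_\varepsilon|$ (which would follow, for instance, from coercivity of $G$, or from a structure condition of the type $|G(x,p)-G(y,p)|\le m(|x-y|(1+|p|))$ as in Theorem~\ref{comparison}). The statement in the paper is slightly informal on this point. For the only application made of the theorem here --- the eikonal Hamiltonian $G(p)=\|p\|-1$ in Corollary~\ref{firstbound} --- $G$ is independent of $x$, the difference $G(y_\varepsilon,p_\varepsilon)-G(x_\varepsilon,p_\varepsilon)$ vanishes identically, and your argument closes without any further assumption, as you correctly note.
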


In order to prove the uniqueness in the class of positive solution for Equation \eqref{stat} we start with a Lipschitz estimate for the viscosity solution of $\|\nabla \phi\| = 1$. We employ a result due to Giga, Liu and Mitake \cite{SNPA} that we state here in our setting for the reader convenience: 
\begin{prop}[\cite{SNPA}]\label{giga}
Given a discontinuos viscosity solution of \eqref{stat}, for every $y \in \Omega$ and $r>0$ such that $B(x,r) \subset \Omega$, we have that
\begin{equation}
\|\phi(x) - \phi(y)\| \leq \|x-y\|_\star \quad \forall x \in B(y,r/4)\, .
\end{equation}
\end{prop}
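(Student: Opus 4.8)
The estimate is a local regularity statement: it says that a viscosity solution of $\|\nabla\phi\|=1$ is automatically $1$-Lipschitz for the dual norm $\|\cdot\|_\star$, hence continuous. The plan is to prove that \emph{every} upper semicontinuous viscosity subsolution $u$ of $\|\nabla u\|=1$ in $\Omega$ satisfies
\[
u(b)-u(a)\le\|b-a\|_\star\qquad\text{whenever the segment }[a,b]\subset\Omega,
\]
and then to apply this with $u=\phi^\star$. Since $\|b-a\|_\star=\|a-b\|_\star$, exchanging $a$ and $b$ gives the two–sided bound; if $B(y,r)\subset\Omega$ and $x\in B(y,r/4)$ then $[y,x]\subset B(y,r/4)\subset\Omega$, so the statement follows (the factor $r/4$ is merely a convenient buffer — any radius $<r$ works). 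The only place where the arbitrary norm enters is the duality pairing $p\cdot q\le\|p\|\,\|q\|_\star$, which is just the definition of $\|\cdot\|_\star$.

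First I would reduce the displayed inequality to the case of a smooth subsolution by sup–convolution. Fix $B(y,r)\subset\Omega$ with $|\phi^\star|\le M$ there, and for small $\delta>0$ put $u^\delta(x)=\sup_{z\in\overline{B(y,r)}}\big(\phi^\star(z)-\tfrac1{2\delta}|x-z|^2\big)$. Then (a) $u^\delta\ge\phi^\star$, $u^\delta$ is monotone in $\delta$, and $u^\delta\to\phi^\star$ pointwise on $B(y,r)$ as $\delta\to0$ (the maximiser is $O(\sqrt\delta)$-close to $x$, and one uses upper semicontinuity); (b) $u^\delta+\tfrac1{2\delta}|x|^2$ is convex, so $u^\delta$ is semiconvex, hence locally Lipschitz and twice differentiable a.e. by Alexandrov; (c) on $B(y,r/2)$, for $\delta$ small, $u^\delta$ is again a viscosity subsolution of $\|\nabla u^\delta\|=1$ — if $v\in C^1$ touches $u^\delta$ from above at $x_0$ with maximiser $z_0\in B(y,r)$, then a translate of $v$ touches $\phi^\star$ from above at $z_0$ with the same gradient at the contact point, and here it is essential that the Hamiltonian does not depend on $x$. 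Testing (c) at a point of twice–differentiability with a paraboloid from above gives $\|\nabla u^\delta\|\le1$ a.e. on $B(y,r/2)$. A Lipschitz function with $\|\nabla u^\delta\|\le1$ a.e. is $1$-Lipschitz for $\|\cdot\|_\star$ (mollify to obtain smooth functions with $\|\nabla\cdot\|\le1$ everywhere, by convexity of $\|\cdot\|$; integrate along a segment using $p\cdot q\le\|p\|\,\|q\|_\star$; let the mollification parameter go to $0$), so $u^\delta(b)-u^\delta(a)\le\|b-a\|_\star$ for $a,b\in B(y,r/2)$. Letting $\delta\to0$ and using (a) gives the claim for $\phi^\star$.

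This shows $\phi^\star$ is $\|\cdot\|_\star$-Lipschitz with constant $1$, and in particular continuous. The delicate point is the remaining identification of $\phi$ with $\phi^\star$: the supersolution inequality only yields $\|\nabla\phi_\star\|\ge1$ at test functions touching from below, which contains no Lipschitz information and cannot be exploited by the same regularization trick, so continuity of the solution itself (equivalently $\phi_\star=\phi^\star$) must be obtained separately, by a comparison argument on bounded subdomains in the spirit of Theorem \ref{compdisc} (see \cite{SNPA}). Once that is known the displayed estimate holds for $\phi$ and the Proposition is proved. Thus the main obstacle is not the Lipschitz bound — which is a clean consequence of coercivity of $\|\cdot\|$ via sup–convolution — but the fact that, a priori, the conclusion also asserts continuity, something the supersolution structure does not give directly.
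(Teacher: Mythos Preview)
The paper does not prove this proposition: it is quoted from Giga--Liu--Mitake \cite{SNPA} and stated ``for the reader convenience'' without any argument, so there is nothing in the paper to compare your attempt against.

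That said, your sketch is the standard route and is essentially correct. The sup-convolution regularisation of $\phi^\star$, the preservation of the subsolution property (here relying, as you note, on the $x$-independence of the Hamiltonian $p\mapsto\|p\|-1$), the a.e.\ bound $\|\nabla u^\delta\|\le 1$ at points of twice-differentiability, and the passage to $1$-Lipschitz continuity for the dual norm via integration along segments and the duality inequality $p\cdot q\le\|p\|\,\|q\|_\star$ are all valid. This is precisely the usual proof of interior Lipschitz regularity for viscosity subsolutions of coercive first-order equations. You are also right to isolate the one genuine subtlety: the supersolution inequality gives no Lipschitz information, so the identification $\phi_\star=\phi^\star$ --- and hence the stated bound for $\phi$ itself rather than for its upper envelope --- is a separate issue that must come from a comparison argument; in \cite{SNPA} this is indeed handled by comparison on bounded subdomains, and the present paper simply imports the conclusion. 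One cosmetic remark: the factor $r/4$ is inherited from \cite{SNPA}, where a definite fraction of the ball is consumed by the sup-convolution step; as you say, any radius strictly less than $r$ would do for the segment argument once the a.e.\ gradient bound is in hand.
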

Thanks to Proposition \ref{giga} we deduce a comparison principle for \emph{positive} viscosity solution of \eqref{stat}. 
\begin{cor}\label{firstbound}
Let $\phi$, locally bounded in $\Omega$ be a discontinuous viscosity solution of \eqref{stat}. Suppose in addition that $\phi(x)\geq 0$ for every $x\in \Omega$. Then $\phi(x)=d_{\|\cdot\|_\star}(x,\partial \Omega)$. 
\end{cor}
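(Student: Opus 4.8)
The plan is to establish the two inequalities $\phi\le d$ and $\phi\ge d$ separately, where $d(x):=d_{\|\cdot\|_\star}(x,\partial\Omega)=\inf\{\|x-y\|_\star:y\in\partial\Omega\}$: the first will follow from the Lipschitz estimate of Proposition \ref{giga}, the second from the comparison principle of Theorem \ref{compdisc} applied on a well-chosen ball. Assume $\partial\Omega\neq\emptyset$ (otherwise the statement is vacuous), so that $d$ is finite, and recall at the outset that $d$ is itself a continuous viscosity solution of \eqref{stat}: it is $1$-Lipschitz with respect to $\|\cdot\|_\star$, so $\|\nabla d\|\le 1$ a.e. and, by the standard test-function argument, $d$ is a subsolution; and it is a supersolution since it is the infimum of the classical solutions $\|\cdot-y\|_\star$, $y\in\partial\Omega$. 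Moreover $d=0$ on $\partial\Omega$.

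For the upper bound, fix $x\in\Omega$, let $y\in\partial\Omega$ realise the distance, $\|x-y\|_\star=d(x)$, and note that $x_s:=(1-s)x+sy\in\Omega$ for $s\in[0,1)$, with $x_s$ at $\|\cdot\|_\star$-distance $(1-s)d(x)$ from $\partial\Omega$. For each $s<1$ the segment $[x,x_s]$ sits at positive distance from $\partial\Omega$, so chaining the local estimate of Proposition \ref{giga} over finitely many points of this segment gives $|\phi(x)-\phi(x_s)|\le\|x-x_s\|_\star=s\,d(x)$, and the same estimate shows $s\mapsto\phi(x_s)$ is Lipschitz on $[0,1)$, hence has a limit $\ell$ as $s\to1^-$. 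Letting $s\to1$ yields $\phi(x)\le \ell+d(x)$; since $\phi\ge0$ and $\phi$ attains the boundary value $0$ on $\partial\Omega$, one gets $\ell=0$, so $\phi(x)\le d(x)$. The delicate point is precisely the identification $\ell=0$: the radius of validity in Proposition \ref{giga} shrinks as $x_s\to y$, so one must combine the interior Lipschitz control with the boundary condition in \eqref{stat}, and this is where I expect the main technical care to be needed.

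For the lower bound, fix $x_0\in\Omega$, put $r_0:=d(x_0)$, and let $B:=\{z\in\R^n:\|z-x_0\|_\star<r_0\}$; by definition of $d$ this is a \emph{bounded} open set contained in $\Omega$. On $B$ consider the cone $\psi(z):=r_0-\|z-x_0\|_\star$. Being $1$-Lipschitz for $\|\cdot\|_\star$, $\psi$ is a viscosity subsolution of $\|\nabla\phi\|=1$ in $B$, and $\psi\equiv0$ on $\partial B$. On the other hand the lower semicontinuous envelope $\phi_*$ of $\phi$ is a supersolution of the same equation in $B\subset\Omega$, and $\phi_*\ge0$ on $\partial B$ because $\phi\ge0$ in $\Omega$ and $\phi=0$ on $\partial\Omega$; hence $\psi\le\phi_*$ on $\partial B$. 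The Hamiltonian $G(p)=\|p\|-1$ is uniformly continuous and convex in $p$, the constant function is a $C^1$ strict subsolution ($G(0)=-1<0$), and $\phi$ is locally bounded on $\overline B\subset\overline\Omega$ thanks to the upper bound already proved. Theorem \ref{compdisc} applied on $B$ then gives $\psi\le\phi_*$ in $B$, and evaluating at $x_0$ yields $r_0=\psi(x_0)\le\phi_*(x_0)\le\phi(x_0)$, i.e. $\phi(x_0)\ge d(x_0)$. Combining the two inequalities, $\phi\equiv d$ on $\Omega$, and $\phi=0=d$ on $\partial\Omega$.

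The crux of the argument is the asymmetric use of the two hypotheses: positivity of $\phi$ is what lets the lower bound be localised to a ball genuinely contained in $\Omega$, thereby sidestepping the possible unboundedness of $\Omega$ (for which a global application of Theorem \ref{compdisc} would be unavailable), whereas the upper bound rests entirely on turning the local Lipschitz estimate of Proposition \ref{giga} into a bound up to the boundary. I expect the behaviour of $\phi$ as one approaches $\partial\Omega$ in the upper bound to be the main obstacle.
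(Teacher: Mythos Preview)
Your upper bound follows the same line as the paper's: both chain the interior Lipschitz estimate of Proposition~\ref{giga} from $x$ toward the nearest boundary point, and both face the same subtlety of pushing the estimate up to $\partial\Omega$. The paper does not spell this out either; it simply invokes Proposition~5.8 of \cite{SNPA} to obtain $|\phi(x)-\phi(p(x))|\le\|x-p(x)\|_\star$ directly, whereas you flag the boundary passage as the point needing care. So on this half the two arguments coincide in substance.

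For the lower bound the routes differ. The paper truncates $\Omega$ to the bounded set $\Omega_R:=\Omega\cap B_R(0)$, applies Theorem~\ref{compdisc} there to get $\phi(x)\ge d_{\|\cdot\|_\star}(x,\partial\Omega_R)$ (positivity of $\phi$ supplies the boundary inequality on $\partial B_R\cap\Omega$), and then lets $R\to\infty$. Your localisation is sharper: for each $x_0$ you compare $\phi$ with the cone $r_0-\|\cdot-x_0\|_\star$ on the $\|\cdot\|_\star$-ball of radius $r_0=d(x_0)$, which lies entirely in $\Omega$, and read off $\phi(x_0)\ge r_0$ at the centre. This avoids the limiting step altogether and is a bit cleaner; the paper's version has the minor advantage of giving $\phi\ge d_{\|\cdot\|_\star}(\cdot,\partial\Omega_R)$ on all of $\Omega_R$ in one stroke. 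In both cases positivity of $\phi$ plays exactly the same role, furnishing the boundary inequality that makes Theorem~\ref{compdisc} applicable on a bounded domain.
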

\begin{proof}
Consider $x \in \Omega$ and let $p(x) \in \partial \Omega$ be such that $\|x-p(x)\|_\star = d_{\|\cdot\|_\star}(x,\partial \Omega)$. Then, using Proposition \ref{giga} (see also Proposition 5.8 in \cite{SNPA} for a detailed argument) we have
\begin{equation*}
|\phi(x) - \phi(p(x))| \leq \|x-p(x)\|_\star\, .
\end{equation*}
Hence
\begin{equation}\label{read}
\phi(x) = |\phi(x) - \phi(p(x))| \leq \|x-p(x)\|_\star = d_{\|\cdot\|_\star}(x,\partial \Omega)\, .
\end{equation}
On the other hand, fixing an arbitrary $R>0$ and considering $\Omega_R := \Omega \cap B_R(0)$ we have that $\phi(x) \geq d_{\|\cdot\|_\star}(x,\partial \Omega_R)$, thanks to Theorem \ref{compdisc} and the positivity of $\phi$. 
Sending $R$ to $+\infty$ and using \eqref{read} we infer that $\phi(x) =  d_{\|\cdot\|_\star}(x,\partial \Omega)$.
\end{proof}

\section{Long time behaviour}

In order to overcome the difficulty pointed out in Remark \ref{nocoerc} we build barriers for the viscosity solution in such a way that we can control the zero level set of the solution uniformly in time close to $\Gamma$ (see \cite{ARSF} for an other example of application of this technique).

\subsection{Construction of the barriers}

\begin{prop}[Construction of the barriers]
\label{barrier}
There exists two locally Lipschitz functions $v^\star,v_\star : \R^n \times [0,+\infty) \rightarrow \R$ such that they are independent on $t$ in a $\sigma$-neighborhood of $\Gamma$ and
\begin{itemize}
\item [$i)$]$v_\star,v^\star > 0$ in $D_+ \times [0,+\infty)$,
\item [$ii)$]$v_\star,v^\star < 0$ in $D_- \times [0,+\infty)$, 
\item [$iii)$] $v_\star \leq u \leq v^\star$ in $\R^n \times [0,+\infty)$.
\end{itemize}
\end{prop}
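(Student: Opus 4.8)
The plan is to construct the two barriers separately in the $\sigma$-neighborhood $\Gamma_\sigma$ of $\Gamma$ and away from it, and then glue the pieces together. Near $\Gamma$ the natural candidates are built from the function $\widetilde u$ provided by hypothesis $(\textbf{H4})$: since $H(\|\nabla\widetilde u\|)\le\alpha<0$ and $f$ has the sign of $u_0$ (and of $\widetilde u$), the product $f(x)H(\|\nabla\widetilde u(x)\|)$ is $\le 0$ in $D_+$ and $\ge 0$ in $D_-$; this one-sided information is exactly what is needed to make a time-independent sub/supersolution near $\Gamma$. More precisely I would try $v_\star$ built so that it equals (a suitable multiple of) $\widetilde u$ in $D_+\cap\Gamma_\sigma$ and a steeper negative profile in $D_-\cap\Gamma_\sigma$, and symmetrically for $v^\star$, so that $v_\star\le 0\le v^\star$ agrees in sign with $u_0$ and the stationary HJ inequality $f(x)H(\|\nabla v\|)\lessgtr 0$ holds in the viscosity sense across $\Gamma$ (the only delicate point on $\Gamma$ itself, where $f=0$, so any inequality is automatically satisfied there, which is why a nonsmooth glueing is harmless).

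Next, outside $\Gamma_\sigma$ the Hamiltonian is coercive, so I would use the linear-in-$t$ barriers $u_0(x)\pm Ct$ already exhibited in the existence proof, or rather their truncations: the supersolution $v^\star$ should be taken as the minimum of $u_0(x)+Ct$ (valid globally) with the time-independent near-$\Gamma$ profile, and $v_\star$ as the maximum of $u_0(x)-Ct$ with its near-$\Gamma$ counterpart. Taking minima of supersolutions (resp. maxima of subsolutions) preserves the sub/supersolution property in the viscosity sense, and by choosing the near-$\Gamma$ profiles to be pinched strictly between $0$ and $u_0$ in sign-consistent fashion — using $(\textbf{G4})$, i.e. $\|\nabla u_0\|$ bounded below on $\Gamma$, to guarantee that a fixed small multiple of $u_0$ still dominates/stays dominated appropriately — the resulting glued functions are independent of $t$ on a possibly smaller neighborhood of $\Gamma$, satisfy $(i)$ and $(ii)$, and are locally Lipschitz.

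Finally, property $(iii)$, $v_\star\le u\le v^\star$, follows from the comparison principle (Theorem~\ref{comparison}), once one checks $v_\star(x,0)\le u_0(x)\le v^\star(x,0)$: this holds because at $t=0$ each barrier is the min/max of $u_0$ with something sign-consistent with $u_0$, hence pinched on the correct side of $u_0$. One must be slightly careful that the comparison principle as stated is for uniformly continuous sub/supersolutions, while the glued barriers are only locally Lipschitz and unbounded; however, since away from $\Gamma_\sigma$ they coincide with $u_0(x)\pm Ct$ which have the same growth at infinity as $u$, and near $\Gamma$ they are bounded, a standard localization/penalization argument at infinity reduces to the stated comparison result.

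The main obstacle I anticipate is the construction near $\Gamma$: producing a \emph{single} locally Lipschitz function that is simultaneously a viscosity sub- (resp. super-) solution on both sides of $\Gamma$, strictly sign-consistent with $u_0$, and that matches smoothly enough onto the outer linear barriers so that the glued object is genuinely time-independent in a full neighborhood of $\Gamma$. The interplay of the constants — how small $\sigma$ must be, how large the slope of the inner negative (resp. positive) profile, how small the multiple of $\widetilde u$ — has to be arranged so that the min/max glueing does not destroy time-independence near $\Gamma$ (i.e. the linear-in-$t$ piece must not be the active branch there for any $t\ge 0$), and this is where hypotheses $(\textbf{G4})$, $(\textbf{G5})$ and $(\textbf{H4})$ all get used together.
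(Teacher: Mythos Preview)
Your strategy near $\Gamma$ is essentially correct and matches the paper's: on the ``easy'' side one uses $\widetilde u$ from $(\textbf{H4})$, and on the other side a steep profile such as $k_1u_0$ with $k_1$ large, relying on $(\textbf{G4})$ and $(\textbf{H2})$ to make $H(k_1\|\nabla u_0\|)\geq 0$ in a tube $\Gamma_{2\sigma}$. The genuine gap is in the region \emph{away} from $\Gamma$.

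Your outer barriers $u_0(x)\pm Ct$ drift to $\mp\infty$ as $t\to+\infty$, so in the glued functions $v^\star=\min(u_0+Ct,\,w^\star)$ and $v_\star=\max(u_0-Ct,\,w_\star)$ the linear branch is eventually inactive and the barrier reduces to the time-independent profile $w^\star$ (resp.\ $w_\star$) \emph{everywhere}. But that profile cannot be a global super/subsolution with the required sign. Concretely, take $v^\star$ in $D_+$: you need $H(\|\nabla w^\star\|)\geq 0$, hence $\|\nabla w^\star\|$ bounded away from the set $\{H<0\}$. A choice like $k_1u_0$ achieves this only where $\|\nabla u_0\|$ is bounded below, i.e.\ near $\Gamma$; at any critical point of $u_0$ inside $D_+$ (and symmetrically, at the interior minimum of $u_0$ in $D_-$, which always exists) the gradient vanishes and the inequality fails. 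So for large $t$ your $v^\star$ is not a supersolution there, and the comparison with $u$ breaks down. The difficulty you anticipated is therefore not the gluing across $\Gamma$ but the lack of a time-independent barrier far from $\Gamma$.

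The paper's construction circumvents this with a different outer piece: instead of $u_0+Ct$ it takes, in $D_+\setminus\Gamma_\sigma^+$,
\[
v^\star(x,t)=k_1\,u_0(x)\,e^{k_2t(u_0(x)-\sigma)^2},
\]
and symmetrically for $v_\star$ in $D_-$. The point of the exponential is that $\partial_t v^\star \geq k_1k_2\sigma^3$ outside $\Gamma_{2\sigma}^+$, so by choosing $k_2$ large the time derivative alone dominates $|f(x)H(\|\nabla v^\star\|)|$ regardless of the gradient; no lower bound on $\|\nabla u_0\|$ is needed there. This piece also matches $k_1u_0$ smoothly on $\partial\Gamma_\sigma^+$ (the exponent vanishes there), so the full $v^\star$ is $C^1$ off $\Gamma$ and genuinely equal to $k_1u_0$ on $\Gamma_\sigma^+$ and to $\widetilde u$ on $D_-$ for all $t\geq 0$, giving the stated time-independence without any min/max trick. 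A secondary issue with your min/max gluing is that, even near $\Gamma$, the linear branch $u_0\pm Ct$ is active for short times (since e.g.\ $\widetilde u<u_0$ in $D_+$), so the glued object is not time-independent in any fixed neighborhood of $\Gamma$ for \emph{all} $t\geq 0$; the paper's explicit piecewise definition avoids this as well.
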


\begin{dimo}
Notice firstly that thanks to (\textbf{G4}) there exists $M>0$ such that 
\begin{displaymath}
\inf_{x\in \Gamma} \|\nabla u_0(x)\| > 2M\,.
\end{displaymath}
Moreover as $u_0 \in C^1(\R^n)$ there exists $\sigma > 0$ such that $ \|\nabla u_0\|  \geq M$ in $\Gamma_{2\sigma}$.  

Define $\Gamma_\sigma^+ := \{x \in \R^2: 0\leq u_0(x) \leq \sigma\}$ and let us consider the following function $v^\star : \R^n\times [0,+\infty) \rightarrow \R$:
\begin{equation}\label{barr1}
v^\star(x,t) :=  \left\{
\begin{array}{ll}
k_1u_0(x)  & \mbox{in } \Gamma^+_\sigma \times [0,+\infty)   \\
k_1u_0(x)e^{k_2t(u_0 - \sigma)^2} & \mbox{in } (D_+ \setminus \Gamma^+_{\sigma}) \times [0,+\infty)\\
\widetilde u(x) & \mbox{in } D_- \times [0,+\infty)\, ,
\end{array}
\right.
\end{equation}
where $k_1,k_2>0$ will be choosen later. It is easy to verify that $v^\star$ is differentiable for every $x\notin \Gamma$. We want to prove that this defines a supersolution. 

For $x\in \Gamma^+_\sigma$ and $t\geq 0$ one has
$\partial_tv^\star(x,t) =  0$ and $\|\nabla v^\star\| = k_1\|\nabla u_0\| \geq  k_1M$. Hence choosing $k_1$ big enough, thanks to hypothesis (\textbf{H2}) we ensure that $v^\star$ is a supersolution. For every $x\in D_+ \setminus \Gamma^+_{\sigma}$ and $t\geq 0$ we have
\begin{displaymath}
\partial_t v^\star = 2k_1k_2u_0 (u_0 - \sigma)^2 e^{k_2t(u_0 - \sigma)^2} \mbox{ and } \|\nabla v^\star\| = k_1 \|\nabla u_0\| e^{k_2t(u_0 - \sigma)^2}| 1 + 2u_0(u_0 - \sigma) k_2 t|\,.
\end{displaymath}
Therefore given a point $x\in \Gamma^+_{2\sigma} \setminus \Gamma^+_\sigma$ we have that $\partial_t v^\star \geq 0$ and 
\begin{displaymath}
\|\nabla v^\star\| \geq k_1 M\,,
\end{displaymath}
so that, choosing $k_1$ big enough (again thanks to hypothesis (\textbf{H2})), $v^\star$ is a supersolution for $(x,t)\in (\Gamma^+_{2\sigma} \setminus \Gamma^+_\sigma) \times [0,+\infty)$. On the other hand given a point $x\in D_+ \setminus \Gamma^+_{2\sigma}$
\begin{displaymath}
\partial_t v^\star \geq k_1k_2\sigma^3\,. 
\end{displaymath}
So, as a consequence of (\textbf{G2}), it is enough to choose $k_2 \geq \frac{C_1 \min_{p\in \R^n} H(p)}{k_1\sigma^3}$ to infer that $v^\star$ is a supersolution in $D_+ \times [0,+\infty)$.\\
As for $x\in D_-$, using hypothesis (\textbf{H4}) we obtain
\begin{displaymath}
\partial_t v^\star + f(x)H(\|\nabla v^\star\|) = f(x)H(\|\nabla \widetilde  u\|) \geq 0, 
\end{displaymath} 
hence $v^\star$ is a subsolution in $D_-$.\\
Finally for $x\in \Gamma$ we have that $f(x)H(p) = 0$ for every $p\in \R$; therefore, as $v^\star$ does not depend on $t$ in a neighborhood of $\Gamma$, $v^\star$ is a supersolution in $\R^n \times [0, +\infty)$.\\
\vspace*{1mm}\\
Define $\Gamma_\sigma^- = \{x \in \R^2: -\sigma \leq u_0(x) \leq 0\}$ and as before consider the following function $v_\star : \R^n \times [0,+\infty) \rightarrow \R$:
\begin{equation}\label{barr2}
v_\star(x,t) =  \left\{
\begin{array}{ll}
\widetilde u (x)  & \mbox{in } D_+\times [0,+\infty) \\
k_1 u_0(x)  & \mbox{in } (D_- \cap \Gamma^-_\sigma) \times [0,+\infty)   \\
k_1 u_0(x)e^{k_2t(u_0 + \sigma)^2} & \mbox{in } (D_- \setminus \Gamma^-_\sigma) \times [0,+\infty)
\end{array}
\right.
\end{equation}
with $k_1$ and $k_2$ chosen as in the argument above. With similar computation to the first part of the proof it is easy to prove that $v_\star$ defines a subsolution.
\vspace*{2mm}\\
Properties $(i)$ and $(ii)$ of Proposition \ref{barrier} are clear. Moreover choosing $k_1$ big enough and using hypothesis (\textbf{H4}) we notice that $v_\star(x,0) \leq u_0(x) \leq v^\star(x,0)$. Hence by comparison principle we obtain
\begin{equation}
v_\star(x,t) \leq u(x,t) \leq v^\star(x,t)
\end{equation} 
for every $(x,t) \in \R^n \times [0,+\infty)$, that is $(iii)$. 

\end{dimo}

\begin{cor}\label{localbo}
Given $u(x,t)$ the viscosity of \eqref{eqn} we have that $u(x,t)$ is locally bounded in $\R^n$ uniformly in $t\in [0,+\infty)$. 
\end{cor}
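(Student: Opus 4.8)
The plan is to get the bound from the barriers of Proposition~\ref{barrier} together with the gradient estimate \eqref{twolip} of Proposition~\ref{unifo}, splitting $\R^n$ into the region close to $\Gamma$, where the barriers do the work, and the region $\{|u_0|\ge\sigma\}$ where $f$ is bounded away from zero and coercivity does.

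First I would fix a compact set $K\subset\R^n$ and the $\sigma>0$ coming from Proposition~\ref{barrier}, and set $\Gamma_\sigma:=\{x:|u_0(x)|\le\sigma\}$. On $\Gamma_\sigma$ both $v_\star$ and $v^\star$ are independent of $t$ and locally Lipschitz, hence bounded on $\overline{\Gamma_\sigma\cap K'}$ for any fixed compact $K'$; since $v_\star\le u\le v^\star$, this already gives $\sup\{|u(x,t)|:x\in\Gamma_\sigma\cap K',\ t\ge0\}<\infty$. Outside $\Gamma_\sigma$ the barriers grow exponentially in $t$, so there I would instead use that $|f|$ is bounded below on compact subsets of $\R^n\setminus\Gamma$: by \eqref{twolip}, for every compact $Q\subset\R^n\setminus\Gamma$ there is $L_Q$ with $\|\nabla u(x,t)\|\le L_Q$ for a.e.\ $(x,t)\in Q\times\R_+$, that is, $u(\cdot,t)$ is equi-Lipschitz on $Q$ uniformly in $t$ (the a.e.\ bound being upgraded to an honest Lipschitz estimate on the open set $\{|u_0|>\sigma\}$ by standard arguments, see below).

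Next comes the geometric step. Given $x_0\in K$ with, say, $u_0(x_0)>\sigma$, let $q\in\{u_0=\sigma\}$ realise $\dist(x_0,\{u_0=\sigma\})$. Choosing $p\in\Gamma$ nearest to $x_0$, along the segment $[x_0,p]$ the value of $u_0$ drops from a value $>\sigma$ to $0$, so by the intermediate value theorem $\dist(x_0,\{u_0=\sigma\})\le\dist(x_0,\Gamma)=:d(x_0)$, and $d$ is bounded on $K$ because $\Gamma\ne\emptyset$ and $K$ is compact; moreover, by minimality of $q$, the segment $[x_0,q]$ stays in $\{u_0\ge\sigma\}$. Hence $[x_0,q]$ is contained in a fixed compact set $Q\subset\R^n\setminus\Gamma$ (a bounded neighbourhood of $K$ intersected with $\{u_0\ge\sigma\}$) and $q\in\Gamma_\sigma\cap Q$; integrating the bound $\|\nabla u(\cdot,t)\|\le L_Q$ along $[x_0,q]$ and using equivalence of norms gives $|u(x_0,t)|\le|u(q,t)|+C\,L_Q\,d(x_0)$, which is uniformly bounded in $t$ by the previous step. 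On this region the lower bound is immediate, since $u(x_0,t)\ge v_\star(x_0)=\widetilde u(x_0)$ by the definition \eqref{barr2} of $v_\star$ on $D_+$. The case $u_0(x_0)<-\sigma$ is symmetric, using $u\le v^\star=\widetilde u$ (formula \eqref{barr1} on $D_-$) for the upper bound and the level set $\{u_0=-\sigma\}$ for the lower one. Combining the three regions $\Gamma_\sigma$, $\{u_0>\sigma\}\cap K$, $\{u_0<-\sigma\}\cap K$ yields $\sup_{K\times[0,\infty)}|u|<\infty$.

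The step I expect to be the main (though still routine) obstacle is the passage from the almost-everywhere estimate \eqref{twolip} to a $t$-uniform Lipschitz bound that can be integrated along a prescribed segment: one either restricts to the open set $\{|u_0|>\sigma\}$ and invokes that a locally Lipschitz function with an a.e.\ gradient bound is Lipschitz for the intrinsic metric of that set, or perturbs the segment slightly to land on a line along which the Fubini-type trace of $u(\cdot,t)$ is absolutely continuous, and then lets $t$ vary using continuity of $u$ in $t$ (which also covers the null set of exceptional times in \eqref{twolip}). Everything else is bookkeeping with the regions above and the explicit formulas \eqref{barr1}--\eqref{barr2} for $v^\star$ and $v_\star$.
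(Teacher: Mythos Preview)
Your proposal is correct and follows essentially the same route as the paper: control $u$ on the tube $\Gamma_\sigma$ by the time-independent barriers of Proposition~\ref{barrier}, and propagate this bound to the rest of any compact by integrating the spatial Lipschitz estimate \eqref{twolip} along a segment joining a point outside $\Gamma_\sigma$ to $\partial\Gamma_\sigma$. The only cosmetic differences are that the paper phrases the outer-region step as an argument by contradiction rather than a direct estimate, and that you are more explicit about two points the paper handles loosely---namely, that the chosen segment remains in $\{|u_0|\ge\sigma\}$ (your minimality argument for $q$), and the passage from the a.e.\ gradient bound to an integrable one along a line.
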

\begin{proof}
From Proposition \ref{barrier} we have that
\begin{equation}
v_\star(x,t) \leq u(x,t) \leq v^\star(x,t)
\end{equation} 
for every $(x,t) \in \R^n \times [0,+\infty)$ where $v^\star$ and $v_\star$ are defined as in \eqref{barr1} and \eqref{barr2}. Therefore $u(x,t)$ is bounded in $\Gamma_\sigma$ for some $\sigma > 0$ and
\begin{equation}\label{bm}
|u(x,t)| \leq \widetilde C :=k_1 \max_{\partial \Gamma_\sigma} u_0(x) \qquad \mbox{in } \partial \Gamma_\sigma \times (0,+\infty)\,.
\end{equation}
Moreover by Proposition \ref{unifo} we have that 
\begin{equation}\label{lips}
\|\nabla u\|_{L^\infty(\R^2 \times \R_+)}\leq C \quad \mbox{ in } \R^2 \setminus \Gamma_\sigma\,.
\end{equation}
Suppose by contraddiction that there exists $x_0$ in $D_- \setminus \Gamma_\sigma$ such that $|u(x_0,t)| \rightarrow +\infty$ as $t\rightarrow +\infty$. If this is the case for every $n\in \N$ we can find $t_n > 0$ such that $|u(x_0,t_n)| \geq Cd_{\|\cdot\|_\star}(x_0, \Gamma) + \widetilde C + n$.
By the estimate \eqref{lips} we have also that there exists a segment $l_{x_0}$ passing through $x_0$ such that $\nabla u(x,t_n)$ exists and $\|\nabla u(x,t_n)\| \leq C$ for almost every $x \in l_{x_0} \cap (D_- \setminus \Gamma_\sigma)$. Hence, denoting by $x_1$ the first intersection of $l_{x_0}$ with $\partial \Gamma_\sigma$, we obtain using Equation \eqref{bm} that
\begin{eqnarray*}
Cd_{\|\cdot\|_\star}(x_0, \Gamma) + n & \leq & 
|u(x_0, t_n) - u(x_1,t_n)|\\
&=& \left|\int_{0}^1 \langle \nabla u(x_0 + h(x_1 - x_0), t_n), x_1-x_0\rangle \, dh \right|\\
&\leq & C\|x_1-x_0\|_\star\,, 
\end{eqnarray*}
where we use Cauhcy-Schwarz inequality. As the estimate holds for every $n\in \N$ we have a contradiction.
In an analogous way one reaches a contradiction supposing that there exists $x_0 \in D_+ \setminus \Gamma_\sigma$ such that $|u(x_0,t)| \rightarrow +\infty$ as $t\rightarrow +\infty$.

\end{proof}

\subsection{Convergence to the signed distance function}
In this subsection we prove that for $t\rightarrow +\infty$ the viscosity solution of \eqref{eqn} converges to the signed distance function uniformly on the compact sets of $\R^n$.

Given $\varepsilon > 0$ we consider the rescaling $u^\varepsilon (x,t) = u(x,t/\varepsilon)$. It is easy to verify that if $u$ is a viscosity solution of \eqref{eqn}, then $u^\varepsilon$ is a viscosity solution of 
\begin{equation}\label{homo}
\left\{
\begin{array}{ll}
\epsilon u_t^\varepsilon + f(x) H(\|\nabla u^\varepsilon\|)=0 & \mbox{in } \R^n \times (0,+\infty)\\
u^\varepsilon(x,0) = u_0(x) & \mbox{in } \R^n
\end{array}
\right.
\end{equation}
for every $\varepsilon > 0$. 

In order to pass to the limit as $\varepsilon \rightarrow 0$ we employ the method of half-relaxed limits to $u^\varepsilon$ and we refer to \cite{SDV} for a detailed presentation of this technique. 
We will denote by $\overline u$ and $\underline u$ the upper and the lower half-relaxed limit of $u^\varepsilon$ respectively, defined in the following way in a point $(x,t) \in \R^n \times [0,+\infty)$:
\begin{equation*}
\overline u(x, t) = \limsup_{(\tilde x,\tilde t, \varepsilon) \rightarrow (x,t,0)}u^\varepsilon(\tilde x,\tilde t) \quad \mbox{and} \quad \underline  u(x, t) = \liminf_{(\tilde x,\tilde t, \varepsilon) \rightarrow (x,t,0)}u^\varepsilon(\tilde x,\tilde t)\, .
\end{equation*}
Notice that thanks to Corollary \ref{localbo} the half-relaxed limit are well defined as $u^\varepsilon$ is locally bounded uniformly in $\varepsilon$.
 
We will use the following fundamental property of relaxed limits (see \cite{SDV} Lemma 4.1):
\begin{prop}[\cite{SDV}]\label{key}
If $K$ is a compact set of $\R^n$ and $\underline{u} = \overline u$ on $\R^n$, then 
\begin{equation*}
\lim_{\varepsilon \rightarrow 0} u^\varepsilon = \underline{u} = \overline u \quad \mbox{ uniformly on } K\, .
\end{equation*}
\end{prop}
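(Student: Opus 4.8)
The plan is a soft compactness argument by contradiction, relying only on the local boundedness of the family $\{u^\varepsilon\}$ (Corollary~\ref{localbo}), the semicontinuity built into the definition of the half-relaxed limits, and the compactness of $K$; no property specific to Hamilton--Jacobi equations enters.

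First I would record the elementary structural facts. By construction $\overline u$ is upper semicontinuous and $\underline u$ is lower semicontinuous: writing $\overline u(x,t)=\inf_{r>0}\,\sup\{u^\varepsilon(x',t'):\|x'-x\|+|t'-t|\le r,\ 0<\varepsilon\le r\}$ exhibits $\overline u$ as an infimum of the functions $(x,t)\mapsto\sup\{\dots\}$, from which upper semicontinuity follows in the usual way, and symmetrically for $\underline u$. Local boundedness guarantees that both are finite-valued. Since by hypothesis $\overline u=\underline u$, the common function $v:=\overline u=\underline u$ is simultaneously u.s.c.\ and l.s.c., hence \emph{continuous}; in particular it is continuous on the compact set $K$.

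Next, suppose the conclusion fails. Then there exist $\delta>0$, a sequence $\varepsilon_n\downarrow 0$ and points $(x_n,t_n)\in K$ with $|u^{\varepsilon_n}(x_n,t_n)-v(x_n,t_n)|\ge\delta$ for all $n$. By compactness, after passing to a subsequence (not relabelled), $(x_n,t_n)\to(x_0,t_0)\in K$, and, splitting according to the sign of $u^{\varepsilon_n}(x_n,t_n)-v(x_n,t_n)$, we may assume that either $u^{\varepsilon_n}(x_n,t_n)\ge v(x_n,t_n)+\delta$ for every $n$, or $u^{\varepsilon_n}(x_n,t_n)\le v(x_n,t_n)-\delta$ for every $n$. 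In the first case, continuity of $v$ gives $v(x_n,t_n)\to v(x_0,t_0)$, hence $\limsup_n u^{\varepsilon_n}(x_n,t_n)\ge v(x_0,t_0)+\delta$; on the other hand $(x_n,t_n)\to(x_0,t_0)$ and $\varepsilon_n\to 0$, so by the very definition of the upper half-relaxed limit $\overline u(x_0,t_0)\ge\limsup_n u^{\varepsilon_n}(x_n,t_n)$. Combining, $v(x_0,t_0)=\overline u(x_0,t_0)\ge v(x_0,t_0)+\delta$, a contradiction. The second case is symmetric, using $\underline u(x_0,t_0)\le\liminf_n u^{\varepsilon_n}(x_n,t_n)\le v(x_0,t_0)-\delta$ together with $\underline u(x_0,t_0)=v(x_0,t_0)$. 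This contradiction proves that $u^\varepsilon\to v=\overline u=\underline u$ uniformly on $K$; applying the statement to singletons also yields the pointwise limit $\lim_{\varepsilon\to 0}u^\varepsilon(x,t)=v(x,t)$.

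There is essentially no genuine obstacle here — this is a textbook fact about half-relaxed limits — and the only two points that deserve care are: (i) deducing continuity of the limit from the one-sided semicontinuity of $\overline u$ and $\underline u$, which is exactly where the equality hypothesis $\overline u=\underline u$ is used; and (ii) invoking the defining inequalities of the relaxed limits in the correct direction, namely that $\overline u(x_0,t_0)$ dominates every sequential $\limsup$ taken along $(x_n,t_n,\varepsilon_n)\to(x_0,t_0,0)$, while $\underline u(x_0,t_0)$ is dominated by every such sequential $\liminf$. Local boundedness of $\{u^\varepsilon\}$ is used only to ensure that $\overline u$ and $\underline u$ are finite so that these manipulations make sense.
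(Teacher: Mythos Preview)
Your argument is correct and is the standard proof of this classical fact about half-relaxed limits. Note that the paper does not actually supply a proof of this proposition: it is quoted from \cite{SDV} (Lemma~4.1 there), so there is no ``paper's own proof'' to compare against. Your write-up is essentially the textbook argument one finds in Barles' book. One minor remark: the paper's statement is slightly ambiguous about whether $K$ sits in $\R^n$ or in $\R^n\times(0,+\infty)$ (the functions $u^\varepsilon$, $\overline u$, $\underline u$ depend on both $x$ and $t$); you have implicitly --- and correctly for the application in Theorem~\ref{mainres} --- treated $K$ as compact in space-time, which is the right reading.
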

We are now in position to prove the main result of the paper.
\begin{thm}\label{mainres}
Let $u(x,t)$ a be viscosity solution of \eqref{eqn}. Then $u(x,t)$ converges uniformly to $d_{\|\cdot\|_\star}(x,\Gamma)$ as $t\rightarrow \infty$ on every compact set of $\R^n$.
\end{thm}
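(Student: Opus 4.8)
The plan is to pass to the rescaled equation \eqref{homo} and to prove that, for every $t>0$, the two half-relaxed limits $\overline u$ and $\underline u$ of $u^\varepsilon$ coincide with $d_{\|\cdot\|_\star}(\cdot,\Gamma)$; Proposition \ref{key} will then give $u^\varepsilon\to d_{\|\cdot\|_\star}(\cdot,\Gamma)$ uniformly on compact subsets of $\R^n$, and reading this at time $t=1$ — so that $\varepsilon\to 0^+$ corresponds to $t/\varepsilon\to +\infty$ — is precisely the statement. By Corollary \ref{localbo} the family $\{u^\varepsilon\}$ is locally bounded uniformly in $\varepsilon$, so $\overline u,\underline u$ are finite and the half-relaxed limit machinery applies: since each $u^\varepsilon$ solves \eqref{homo} and the term $\varepsilon u_t^\varepsilon$ disappears in the limit, $\overline u$ is a discontinuous subsolution and $\underline u$ a discontinuous supersolution of the stationary equation $f(x)H(\|\nabla\phi\|)=0$ on $\R^n$; testing with functions of the form $\psi(x)+\eta^{-1}(t-t_0)^2$ shows that for each fixed $t_0>0$ the slices $\overline u(\cdot,t_0)$ and $\underline u(\cdot,t_0)$ are, respectively, a sub- and a supersolution of $f(x)H(\|\nabla\phi\|)=0$ on $\R^n$.

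Next I would use the barriers of Proposition \ref{barrier}, which are $t$-independent near $\Gamma$ and satisfy $v_\star(\cdot,t)=\widetilde u$ on $D_+$ (by \eqref{barr2}) and $v^\star(\cdot,t)=\widetilde u$ on $D_-$ (by \eqref{barr1}). Thus $v_\star\leq u\leq v^\star$ forces $u(x,t)\geq\widetilde u(x)>0$ on $D_+$ and $u(x,t)\leq\widetilde u(x)<0$ on $D_-$ for every $t\geq 0$, while $u\equiv 0$ on $\Gamma$; since $v_\star,v^\star$ are continuous and vanish on $\Gamma$, it follows that $\overline u(\cdot,t_0),\underline u(\cdot,t_0)>0$ on $D_+$, $<0$ on $D_-$, and are continuous up to $\Gamma$ with boundary value $0$. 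By $(\textbf{G4})$ and $u_0\in C^1$ one has $\partial D_+=\partial D_-=\Gamma$. Moreover $(\textbf{H1})$, $(\textbf{H2})$, $(\textbf{H4})$ and $(\textbf{H5})$ force $H<0$ on $[0,1)$ and $H>0$ on $(1,+\infty)$: by $(\textbf{H5})$ and continuity $H$ has constant sign on each of the connected sets $[0,1)$ and $(1,+\infty)$, by $(\textbf{H2})$ it is positive on the second, and $(\textbf{H4})$ forces it to be negative on the first. Hence on the open set $D_+$, where $f>0$, the inequality $f(x)H(q)\leq 0$ is equivalent to $q\leq 1$ and $f(x)H(q)\geq 0$ to $q\geq 1$, so $\overline u(\cdot,t_0)$ is a subsolution and $\underline u(\cdot,t_0)$ a supersolution of $\|\nabla\phi\|=1$ in $D_+$, both nonnegative and equal to $0$ on $\partial D_+=\Gamma$.

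It then remains to carry out, separately on the subsolution and on the supersolution, the two one-sided arguments already used in the proof of Corollary \ref{firstbound}. For $\underline u(\cdot,t_0)$: for every $R>0$, comparing it on the bounded set $D_+\cap B_R(0)$ with the subsolution $d_{\|\cdot\|_\star}(\cdot,\partial(D_+\cap B_R(0)))$ via Theorem \ref{compdisc} — using that $\underline u(\cdot,t_0)\geq 0$ on $\partial B_R(0)\cap\overline{D_+}$ and that both functions vanish on $\Gamma$ — gives $\underline u(\cdot,t_0)\geq d_{\|\cdot\|_\star}(\cdot,\partial(D_+\cap B_R(0)))$ on $D_+\cap B_R(0)$, and letting $R\to+\infty$ yields $\underline u(\cdot,t_0)\geq d_{\|\cdot\|_\star}(\cdot,\Gamma)$ on $D_+$. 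For $\overline u(\cdot,t_0)$: given $x\in D_+$ and $p(x)\in\Gamma$ with $d_{\|\cdot\|_\star}(x,\Gamma)=\|x-p(x)\|_\star$, the $\|\cdot\|_\star$-minimizing segment $[x,p(x)]$ cannot meet $\Gamma=\partial D_+$ except at $p(x)$ (otherwise $\Gamma$ would contain a point strictly closer to $x$ than $p(x)$), so $[x,p(x))\subset D_+$, and the subsolution Lipschitz estimate of Proposition \ref{giga} applied along this segment, together with $\overline u(\cdot,t_0)=0$ on $\Gamma$, gives $\overline u(x,t_0)\leq\|x-p(x)\|_\star=d_{\|\cdot\|_\star}(x,\Gamma)$. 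As $\underline u\leq\overline u$, these force $\overline u(\cdot,t_0)=\underline u(\cdot,t_0)=d_{\|\cdot\|_\star}(\cdot,\Gamma)$ on $\overline{D_+}$. Replacing $(u,u_0,f)$ by $(-u,-u_0,-f)$ — again a solution of \eqref{eqn}, but with $D_+$ and $D_-$ interchanged — yields the same identity on $\overline{D_-}$, hence on all of $\R^n$ for every $t_0>0$, and Proposition \ref{key} concludes.

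The step I expect to be the main obstacle is the identification of $\overline u$ and $\underline u$ with sub/supersolutions of the genuine eikonal \emph{boundary-value} problem \eqref{stat} on the fixed domains $D_\pm$: this combines the (standard but somewhat delicate) passage to the stationary limit in the viscosity sense, the extraction of the sign structure of $H$ from the assumptions, and — crucially — the use of the $t$-independence of the barriers near $\Gamma$, which keeps the zero level set, hence $\partial D_\pm=\Gamma$, frozen along the evolution. Two smaller but genuine points are the geometric remark that the dual-norm minimizing segment from a point of $D_+$ to $\Gamma$ remains inside $\overline{D_+}$ (this is what lets Proposition \ref{giga} be used on $D_+$ alone), and the exhaustion of the possibly unbounded $D_+$ by the bounded sets $D_+\cap B_R(0)$, which is what makes the bounded-domain comparison principle of Theorem \ref{compdisc} applicable.
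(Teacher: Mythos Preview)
Your proposal is correct and follows essentially the same route as the paper: rescale, take half-relaxed limits to get sub/supersolutions of $f(x)H(\|\nabla\phi\|)=0$, use the barriers to pin down the boundary behaviour and sign on $\Gamma$, reduce to the eikonal equation on $D_\pm$ via the sign structure of $H$, and identify the limits with $d_{\|\cdot\|_\star}(\cdot,\Gamma)$ before invoking Proposition~\ref{key}. You are in fact more careful than the paper in two places --- you justify explicitly why $H<0$ on $[0,1)$ and $H>0$ on $(1,+\infty)$, and you apply the two halves of the proof of Corollary~\ref{firstbound} separately to the subsolution $\overline u$ and the supersolution $\underline u$ rather than citing the corollary (which, as stated, concerns full solutions); your symmetry argument $(u,u_0,f)\mapsto(-u,-u_0,-f)$ for $D_-$ is a clean alternative to the paper's direct sign flip of the eikonal equation.
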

\begin{dimo}
Thanks to Proposition \ref{barrier}, we have that for every $(x,t) \in \R^n \times [0,+\infty)$ and $\varepsilon>0$
\begin{displaymath}
v_\star\left(x,\frac{t}{\varepsilon}\right) \leq u^\varepsilon(x,t) \leq v^\star\left(x,\frac{t}{\varepsilon}\right)
\end{displaymath}
and, by construction, $v^\star$ and $v_\star$ depend on $x$ only in a $\sigma$-neighborhood of $\Gamma$. Therefore taking the upper and the lower half relaxed limits on both sides we obtain that $\underline u(x_0,t) = \overline u(x_0,t) = 0$ for every $x_0 \in \Gamma$.
Moreover as $v^\star(x,t)$ is strictly negative in $D_-\times [0,+\infty)$ and $v_\star(x,t)$ is strictly positive $D_+ \times [0,+\infty)$ we infer that the preservation of the zero level set is inherited by the approximate limits, namely
\begin{equation}\label{preservation}
\Gamma = \{x: \overline u(x,t) = 0\} = \{x: \underline u(x,t) = 0\}\,.
\end{equation}
for every $t\in [0,+\infty)$. 

By standard results on the stability of approximate limits (see \cite{SDV}, Section 4.3) we have that $\overline u$ (resp. $\underline u$) is a subsolution (resp. supersolution) of 
\begin{equation*}
f(x)H(\|\nabla u \|) = 0\, . 
\end{equation*}
This yields that $\overline u$ (resp. $\underline u$) is a subsolution (resp. supersolution) of the following equation in $D_+$:
\begin{equation}
H(\|\nabla u(x,t)\|) = 0\, .
\end{equation}
By hypothesis $(\textbf{H5})$ this holds if and only if $\overline u$ (resp. $\underline u$) is a subsolution (resp. supersolution) of
\begin{equation*}
\|\nabla u(x,t)\| = 1\,. 
\end{equation*}
Fixing $t_0 \in (0,+\infty)$ we have that $\overline u(x,t_0)$ is a subsolution and $\underline u(x, t_0)$ is a supersolution of the eikonal equation \eqref{stat} in $D^+$.
This implies by Corollary \ref{firstbound} and Equation \eqref{preservation} that for every $t_0 > 0$ we have
\begin{displaymath}
\overline u(x,t_0) \leq d_{\|\cdot\|_\star}(\Gamma,x) \leq \underline u(x,t_0) \quad \mbox{ in } D_+\, .
\end{displaymath}
Hence, for every $(x,t) \in D_+ \times (0,+\infty)$, we infer $\overline u(x,t) = \underline u(x,t) = d_{\|\cdot\|_\star}(x,\Gamma)$. \\
\vspace*{1mm}\\
On the other hand using again hypothesis $(\textbf{H5})$, we obtain that $\overline u$ (resp. $\underline u$) is a subsolution (resp. supersolution) of the following equation in $D_-$
\begin{displaymath}
-\|\nabla u(x,t)\| +1 =  0\, .
\end{displaymath} 
Fixing again $t_0 \in (0,+\infty)$, then $\overline u(x,t_0)$ is a subsolution and $\underline u(x,t_0)$ is a supersolution in $D_-$ of
\begin{equation*}
-\|\nabla u(x)\| + 1 = 0
\end{equation*}
and consequently $ - \overline u(x,t_0)$ is a supersolution and $- \underline u(x,t_0)$ is a subsolution in $D_-$ of
\begin{equation}\label{eik}
\|\nabla u(x)\| - 1 = 0\, .
\end{equation}
This implies by Corollary \ref{firstbound} and Equation \eqref{preservation} that for every $t_0 > 0$ we have
\begin{displaymath}
- \underline u(x,t_0) \leq \inf\{\|x-y\|_\star: y \in \Gamma\} \leq -\overline u(x,t_0) \quad \mbox{ in } D_-
\end{displaymath}
and hence
\begin{displaymath}
\overline u(x,t_0) \leq d_{\|\cdot\|_\star}(\Gamma,x) \leq \underline u(x,t_0) \quad \mbox{ in } D_- \, .
\end{displaymath}
So for every $(x,t) \in D_- \times (0,\infty)$, we have $\overline u(x,t) = \underline u(x,t) =d_{\|\cdot\|_\star}(\Gamma,x)$. 

Therefore
\begin{equation}
\underline u (x,t) = \overline u (x,t) = d_{\|\cdot\|_\star}(x,\Gamma) \quad \mbox{in } \R^n \times (0+\infty)\, .
\end{equation}
By Proposition \ref{key} we infer that 
\begin{displaymath}
u^\varepsilon(x,t) \rightarrow d_{\|\cdot\|_\star}(x,\Gamma) 
\end{displaymath}
as $\varepsilon \rightarrow 0$, uniformly on every compact set of $\R^n$.

Finally, recalling the definition of $u^\varepsilon$, one concludes.
\end{dimo}

\bibliographystyle{abbrv}
\bibliography{biblio}

\end{document}